\def\Beweisende{\square}            
\def\BewEnde{\hfill{\Beweisende}}
\def\phm{{\hphantom{-}}}
\newcommand{\mathtext}[1]{\mbox{\ \,#1\, \ }}
\def\RR{{\mathbb R}}
\def\NN{{\mathbb N}}
\def\CC{{\mathbb C}}
\def\Vkt#1{{\mathbf #1}}
\begin{document}

\title*{ 
A global approach for the redefinition \\of higher-order flexibility and rigidity
}

\author{G. Nawratil}
\authorrunning{G. Nawratil}
\institute{
  Institute of Discrete Mathematics and Geometry \&  
	Center for Geometry and Computational Design, TU Wien, Austria \newline
  \email{nawratil@geometrie.tuwien.ac.at}}

%
%
\maketitle

\abstract{
The famous example of the double-Watt mechanism given by Connelly and Servatius 
raises some problems concerning the classical definitions of higher-order flexibility and rigidity, respectively, as they attest the cusp configuration of the mechanism a 
third-order rigidity, which conflicts with its continuous flexion. 
Some attempts were done to resolve the dilemma but they could not settle the problem. 
As cusp mechanisms demonstrate the basic shortcoming of any local mobility analysis using higher-order constraints, we present a global approach inspired by Sabitov's finite algorithm for testing the bendability of a polyhedron, 
which allows us (a) to compute iteratively configurations with a higher-order flexion and (b) to come up with a proper redefinition of higher-order flexibility and rigidity. 
The presented approach is demonstrated on several examples (double-Watt mechanisms and Tarnai's Leonardo structure). Moreover, we determine  all configurations of a given 3-RPR manipulator with a third-order flexion and present a corresponding joint-bar framework of flexion order 23.
}

\keywords{higher-order flexibility, higher-order rigidity, double-Watt mechanism, 3-RPR robot}

\section{Introduction}\label{sec:intro}

In this paper we give a redefinition of higher-order flexibility and rigidity of bar-joint
frameworks.  
Such a framework $G(\mathcal{K})$ consists of a knot set 
\begin{equation}
\mathcal{K}=\left\{X_{1},\ldots, X_w\right\}    
\end{equation}
and a graph $G$  on $\mathcal{K}$. 
A knot $X_i$ corresponds 
a rotational/spherical joint (without clearance) in the case of a planar/spatial framework. 
An edge connecting two knots corresponds to a bar. We denote the number of edges by $e$.

By defining the combinatorial structure of the framework as well as the lengths of the bars, which are assumed to be non-zero,    
the intrinsic geometry of the framework is fixed. 
In general the assignment of the intrinsic metric does not uniquely determine the embedding of the framework into the Euclidean space $\RR^d$, thus such a framework can have 
different incongruent realizations.

\subsection{Algebraic approach to rigidity theory}
\label{sec:algapproach}
The relation that two elements of the knot set are edge-connected can also be expressed algebraically. 
They are either quadratic constraints resulting from 
a squared distance of vertices (implied by an edge) or
linear condition, in the case that one of the pin-joints gets an ideal-point. There are further linear conditions steaming from the elimination of 
isometries\footnote{This are 6 linear constraints for $d=3$ and 3 linear constraints for $d=2$.}.  
In total this results in a system of $l$ algebraic equations $c_1=0,\ldots ,c_l=0$  in $m$ unknowns $z_1,\ldots ,z_m$,
which constitute an algebraic variety $V(c_1,\ldots,c_l)$. 
Note that $l$ equals $e+6$ in the spatial case and 
 $e+3$ in the planar one.
Moreover, $m$
equals for the planar case $2w$ and for the spatial one $3w$.

If $V(c_1,\ldots ,c_l)$ is positive-dimensional then the framework is flexible; otherwise rigid. 
The framework is called minimally rigid (isostatic) if the removal of any algebraic constraint (resulting from an edge) will make the framework flexible. 
In this case $m=l$ has to hold. Rigid frameworks, which are not isostatic, are called {\it overbraced} or {\it overconstrained} ($l>m$). 

If $V(c_1,\ldots ,c_l)$ is zero-dimensional, then each real solution corresponds to a realization $G(\Vkt X)$ of the framework for $\Vkt X=(\Vkt x_1,\ldots , \Vkt x_w)$. 
If there is exactly one real solution, then the framework is called globally rigid. 

We can compute in a realization the tangent-hyperplane to each of the hypersurfaces $c_i=0$ in $\RR^m$ for $i=1,\ldots ,l$. Note that this is always possible as 
all hypersurfaces are either hyperplanes or regular hyperquadrics. 
The normal vectors of these tangent-hyperplanes constitute the columns of a  $m\times l$ matrix  $\Vkt R_{G(\Vkt X)}$, which is also known as {\it rigidity matrix} 
of the realization $G(\Vkt X)$; i.e.\
\begin{equation}\label{eq:rigidityM}
\Vkt R_{G(\Vkt X)}=
\begin{pmatrix}
    \frac{\partial c_1}{\partial z_1} &   \frac{\partial c_2}{\partial z_1} & \ldots &  \frac{\partial c_l}{\partial z_1} \\
    \frac{\partial c_1}{\partial z_2} &   \frac{\partial c_2}{\partial z_2} & \ldots &  \frac{\partial c_l}{\partial z_2} \\
    \vdots & \vdots  & \ddots  & \vdots \\
    \frac{\partial c_1}{\partial z_m} &   \frac{\partial c_2}{\partial z_m} & \ldots &  \frac{\partial c_l}{\partial z_m}
\end{pmatrix}.
\end{equation}
If its rank is $m$ then the realization is infinitesimal rigid otherwise it is infinitesimal flexible; i.e. the hyperplanes 
have a positive-dimensional affine subspace in common. Therefore the intersection multiplicity of the $l$ hypersurfaces is at least two in a 
shaky realization. Therefore a shaky configuration can also be seen as the limiting case where at least two realizations of a framework coincide \cite{stachel_wunderlich,stachel_between}.

Clearly, by using the rank condition $rk(\Vkt R_{G(\Vkt X)})<m$ one can also characterize all shaky realizations $G(\Vkt X)$ algebraically by  
the affine variety $V_1$. This so-called shakiness variety is the zero set of the ideal generated by the polynomials $p_{1},\ldots, p_{\mu}$ which correspond to 
all  $\mu:= {{l}\choose{l-m}}$ minors of $\Vkt R_{G(\Vkt X)}$ of order $m\times m$.
Note that for minimally rigid framework $\mu=1$ holds, where the infinitesimal flexibility is given by $p_{1}:\,\, \det(\Vkt R_{G(\Vkt X)})=0$.

\section{Review on higher-order flexibility and rigidity}\label{rev:higher}

A first paper on the higher-order flexion of surfaces was written by Rembs \cite{rembs}.
In contrast first results on higher-order rigidity of surfaces date back to Efimov \cite{efimov}. 
An exhaustive treatment of higher-order flexion and rigidity of surfaces was done by Sabitov in \cite{sabitov}, 
in which also a section is devoted to discrete structures. 
Connelly gave a definition of $2^{nd}$-order flexibility and rigidity of frameworks in \cite{connelly80}.
Tarnai wrote a paper \cite{tarnai} on the definition of higher-order infinitesimal mechanisms, 
which seems to be more problematic than that of a framework due to the existence of non-analytic kinematic pairs. 
According to Stachel \cite{stachel_proposal} all these approaches to higher-order flexible frameworks  
can be unified to the so-called {\it classical} definition, which reads as follows:

\begin{definition} \label{def1}
A framework has a $n^{th}$-order flex if for each vertex $\Vkt x_i$ ($i=1,\ldots,w$) 
there is a polynomial function 
\begin{equation}\label{eq:flex}
\Vkt x_i':=\Vkt x_i+ \Vkt x_{i,1}t+\ldots + \Vkt x_{i,n}t^n \quad \text{with} \quad n>0
\end{equation}
such that
\begin{enumerate}
\item
the replacement of $\Vkt x_i$ by $\Vkt x_i'$ in the equation of the edge lengths gives stationary values 
of multiplicity $\geq n+1$ at $t=0$;
\item
the velocity vectors $\Vkt x_{1,1},\ldots ,\Vkt x_{w,1}$ do not originate from a rigid body motion (incl.\ standstill) 
of the complete framework; i.e.\ they are said to be non-trivial. 
\end{enumerate}
\end{definition}

\begin{remark}
Tarnai's definition relies on the power-series expansion of the elongation of the bar in terms of the displacement, but his definition is equivalent 
to Definition \ref{def1} (cf.\ \cite{tarnai}). Following an idea of Koiter, one can replace the bar elongation by the strain energy of the bars,  
which also results in an equivalent definition (cf.\ \cite{salerno}). 
Moreover, Kuznetsov \cite{kuznetsov} gave another definition of higher-flexibility, which relies on the Taylor expansion 
of the constrained equations of the framework. Without noticing it, exactly the same approach was used by Chen \cite{chen} 
to define the local mobility of a mechanism. It can be seen from \cite{rameau2015}, that the (identical) 
definitions of Kuznetsov and Chen are again equivalent with Definition \ref{def1}. 
\hfill $\diamond$
\end{remark}

Based on the notion of $n^{th}$-order flex given in Definition \ref{def1} one can define $n^{th}$-order rigidity as follows \cite{connelly80,servatius}:
\begin{definition} \label{def2}
A framework is $n^{th}$-order rigid if every $n^{th}$-order flex has $\Vkt x_{1,1},\ldots ,\Vkt x_{w,1}$ trivial as a $1^{st}$-order flex. 
\end{definition}
\begin{remark}
Clearly, in the context of Definition \ref{def1} one is only interested in the flex with maximal $n$; i.e.\ the framework has to be rigid of order $(n+1)$ according to Definition \ref{def2}.
\hfill $\diamond$
\end{remark}
\begin{figure}[t]
\begin{center}
\begin{overpic}
    [height=20mm]{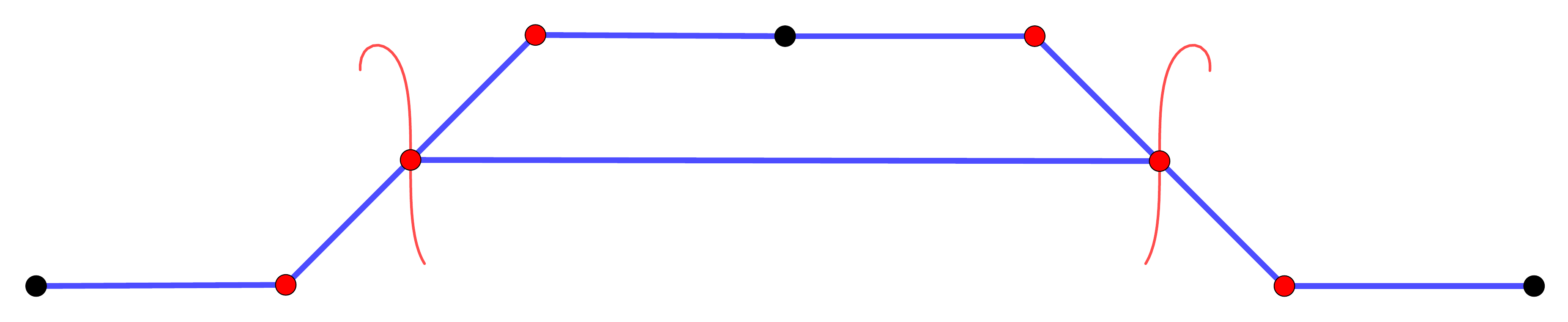}
\begin{scriptsize}
\put(22.5,10){$\Vkt x_1$}
\put(75.5,10){$\Vkt x_2$}
\end{scriptsize}     
  \end{overpic} 
\end{center}	
\caption{Double-Watt mechanism of Connelly and Servatius in its cusp configuration; i.e.\ the mechanism has an instantaneous standstill. The dimensions of each Watt mechanism are as follows: the arms have length $1$ and the coupler is of length $\sqrt{2}$.  The midpoints $\Vkt x_1$ and $\Vkt x_2$ of both couplers are connected by a bar of length $3$. 
}
  \label{fig1}
\end{figure}    
But the famous example of the double-Watt mechanism (cf.\ Fig.\ \ref{fig1}) given by Connelly and Servatius \cite{servatius} raises some problems concerning these Definitions \ref{def1} and \ref{def2}, 
as they attest this mechanism a $3^{rd}$-order rigidity in a certain configuration, which conflicts\footnote{\label{test} One expects from a proper definition that an $n^{th}$-order rigidity implies rigidity (cf.\ \cite{servatius}).} 
with its continuous flexibility. This configuration corresponds to a cusp in the configuration space \cite{servatius}, which was also pointed out by 
M\"uller's  study \cite{muller} of the mechanism from the perspective of kinematic singularities. 
Based on the latter work further examples of cusp mechanisms (even spatial ones) were given in \cite{lopez}.

\begin{figure}[t]
\begin{center}
\begin{overpic}
    [height=16mm]{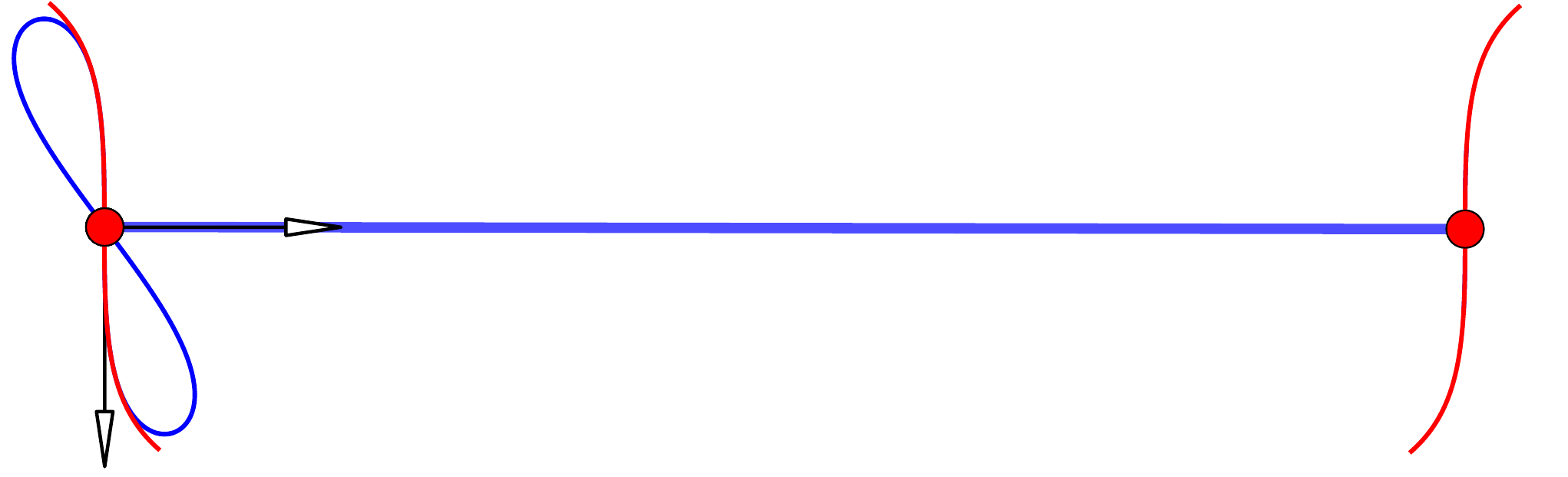}
\begin{scriptsize}
\put(2.5,1.5){$x$}
\put(19,18.5){$y$}
\put(1,15){$\Vkt x_1$}
\put(95.5,15){$\Vkt x_2$}
\end{scriptsize}     
  \end{overpic} 
\end{center}	
\caption{Reduction of the  double-Watt mechanism of Connelly and Servatius to a two-point guidance problem. 
}
  \label{fig1a}
\end{figure}    

\begin{example}\label{ex:watt1}
    In the following we present the analysis of the  double-Watt mechanism according to the method presented by Stachel in \cite{stachel_aim}.
    With respect to the coordinate system displayed in Fig.\ \ref{fig1a} the coupler-curve of the point $\Vkt x_1$ is given by the algebraic equation
    \begin{equation}\label{eq:watt_curve}
        x^6 + 3x^4y^2 + 3x^2y^4 + y^6 + 3x^4 + 6x^3y - 2x^2y^2 + 6xy^3 - 5y^4 - 6xy + 8y^2=0.
    \end{equation}
    We are interested in the branch where the $x$-axis is the tangent to the inflection point. It can be parametrized locally by means of  Puiseux series as:
    \begin{equation}\label{eq:x1watt}
        \Vkt x_1=\begin{pmatrix}
            \tau_1 \\
            \tfrac{1}{2}\tau_1^3+\tau_1^5+\tfrac{9}{4}\tau_1^7+\tfrac{13}{2}\tau_1^9+\ldots
        \end{pmatrix}.  
    \end{equation}
    Clearly, the path of $\Vkt x_2$ is obtained by reflection on the $x$-axis and by translation along the vector $(0,3)^T$ yielding:
    \begin{equation}\label{eq:x2watt}
        \Vkt x_2=\begin{pmatrix}
            \tau_2 \\
            3-\tfrac{1}{2}\tau_2^3-\tau_2^5-\tfrac{9}{4}\tau_2^7-\tfrac{13}{2}\tau_2^9-\ldots
        \end{pmatrix}.  
    \end{equation}
    Thus we end up with a two-point guidance problem, where the time dependence of $\tau_i$ is set up by 
    \begin{equation}\label{eq:taus}
        \tau_i=v_{i,1}t+v_{i,2}t^2+v_{i,3}t^3+\ldots.
    \end{equation}
    Now the $v_{i,j}$ have to be adjusted in order to fulfill
\begin{equation}\label{eq:F}
    F:=\|\Vkt x_2(\tau_2)-\Vkt x_1(\tau_1)\|^2-3^2=o(t^{n})
\end{equation}
for a $n^{th}$-order flexibility at $t=0$. 
We substitute Eq.\ (\ref{eq:taus}) into Eq.\ (\ref{eq:F}) and consider the coefficients $f_i$ of $t^i$ in the resulting expression. We get $f_1=0$ 
and $f_2=(v_{1,1}-v_{2,1})^2$. Setting  $v_{2,1}=v_{1,1}$ we get $f_3=-6v_{1,1}^3$. 
This means with $v_{1,1}\neq 0$ it is only flexible of $2^{nd}$-order implying $3^{rd}$-order rigidity. \hfill $\diamond$
\end{example}

Two attempts are known to the author to resolve the dilemma (cf.\ Footnote \ref{test}):  
Gaspar and Tarnai \cite{gaspar} suggested to use fractional exponents which corresponds to the replacement of Eq.\ (\ref{eq:flex}) by
\begin{equation}\label{eq:fractional}
\Vkt x_i':=\Vkt x_i+ \Vkt x_{i,1}t+\Vkt x_{i,\tfrac{3}{2}}t^{\tfrac{3}{2}}+ \Vkt x_{i,2}t^2 +\Vkt x_{i,\tfrac{5}{2}}t^{\tfrac{5}{2}}\ldots + \Vkt x_{i,n}t^n,
\end{equation}
where $\Vkt x_{1,1},\ldots ,\Vkt x_{w,1}$ is non-trivial. 
This solved the particular problem for the cusp configuration of the double-Watt mechanism but not the parametrization problem according to \cite{leonardo}, where it is also written that ``{\it a very promising approach was presented recently by \cite{stachel_proposal}}''.

This approach of Stachel follows the more general notation of $(k,n)$-flexibility suggested by Sabitov \cite{sabitov} which replaces Eq.\ (\ref{eq:flex}) 
by
\begin{equation}\label{eq:flexk}
\Vkt x_i':=\Vkt x_i+ \Vkt x_{i,k}t^k+\ldots + \Vkt x_{i,n}t^n \quad \text{with} \quad n\geq k>0
\end{equation}
where $\Vkt x_{1,k},\ldots ,\Vkt x_{w,k}$ is non-trivial. In addition Eq.\ (\ref{eq:flexk}) has to represent 
an irreducible flex; this means that  Eq.\ (\ref{eq:flexk}) does not result from 
a polynomial parameter substitution of a lower-order flex.

\begin{example}\label{ex:watt2}
Continuation of the double-Watt mechanism:
According to the notation of Eq.\ (\ref{eq:flexk}) the 
double-Watt mechanism in the cusp configuration is $(1,2)$-flexible but not $(1,3)$-flexible (cf.\ Example \ref{ex:watt1}). 
Therefore we set $v_{1,1}$ and continue Example \ref{ex:watt1} by considering $f_4=(v_{1,2}-v_{2,2})^2$. We set $v_{2,2}=v_{1,2}$ and get $f_5=0$. Moreover, for $f_6$ we obtain the expression $-6v_{1,2}^3+v_{1,3}^2-2v_{1,3}v_{2,3}+v_{2,3}^2$, which can be solved for\footnote{Note that the $\pm$ sign corresponds to the two ways out of the cusp configurations. \label{fn:pm}} $v_{2,3}=v_{1,3}\pm \sqrt{6v_{1,2}^3}$ showing $(2,6)$-flexibility. Moreover, we can proceed in this way (i.e.\ solving $f_i=0$ for $v_{2,i-3}$ for $i>6$) implying  $(2,\infty)$-flexibility. 

We only have to check that the $(2,\infty)$-flexibility was not obtained by the $(1,2)$-flexibility by a polynomial parameter substitution of the form
\begin{equation}\label{eq:polysubs}
    t=\overline{t}^p(a_0+a_1\overline{t}+a_2\overline{t}^2+\ldots) 
\end{equation}
with $a_0\neq 0$ and $p>1$. 
For $p=2$ we get $\overline{f}_1=\overline{f}_2=\overline{f}_3=0$. $\overline{f}_4=a_0^2(v_{1,1}-v_{2,1})^2$ implies $v_{2,1}=v_{1,1}$. Then $\overline{f}_5=0$ and $\overline{f}_6=-6a_0^3v_{1,1}^3$. Therefore the substitution turns the $(1,2)$-flexibility into a reducible $(2,5)$-flexibility. As a consequence the $(2,\infty)$-flexibility has to be an irreducible flex. \hfill 
$\diamond$ 
\end{example}

\begin{remark}
    Note that the substitution of Eq.\ (\ref{eq:polysubs}) into Eq.\ (\ref{eq:fractional}) for $p=2$ yields the  $(2,\infty)$-flexibility of Stachel discussed in the last example. Therefore, Stachel's approach also includes the one of Gaspar and Tarnai \cite{gaspar}. \hfill $\diamond$
\end{remark}
Stachel's proposal was only presented within the Tensegrity Workshop in 2007 \cite{stachel_proposal}, 
but remained unpublished so far. According to Stachel \cite{stachel_private}, the reason for this is the example of another double-Watt mechanism, which is extended by a Kempe-mechanism (cf.\ Fig.\ \ref{fig2}), presented in \cite{stachel_aim}, as no unique flexion order can be identified with his proposed definition. 
Therefore the problem is not yet settled.

\begin{example}\label{ex:watt3}
Stachel's double-Watt framework: In the following we also give this example of Stachel 
where the second Watt-mechanism is just a translation of the first one (see Fig.\ \ref{fig:doubleStachel})  by the vector $(0,3)^T$. Thus we get for the path of $\Vkt x_2$ the following parametrization
\begin{equation}\label{eq:x2stachel}
        \Vkt x_2=\begin{pmatrix}
            \tau_2 \\
            3+\tfrac{1}{2}\tau_2^3+\tau_2^5+\tfrac{9}{4}\tau_2^7+\tfrac{13}{2}\tau_2^9+\ldots
        \end{pmatrix}
    \end{equation}
for the interpretation as a two-point guidance problem, which is illustrated in Fig.\ \ref{fig:doubleStachel_reduced}. 
In this case the two-point guidance is in a branching configuration; i.e.\ it corresponds to a double point in the configuration space.

\begin{figure}[t]
\begin{center}
\begin{overpic}
    [height=20mm]{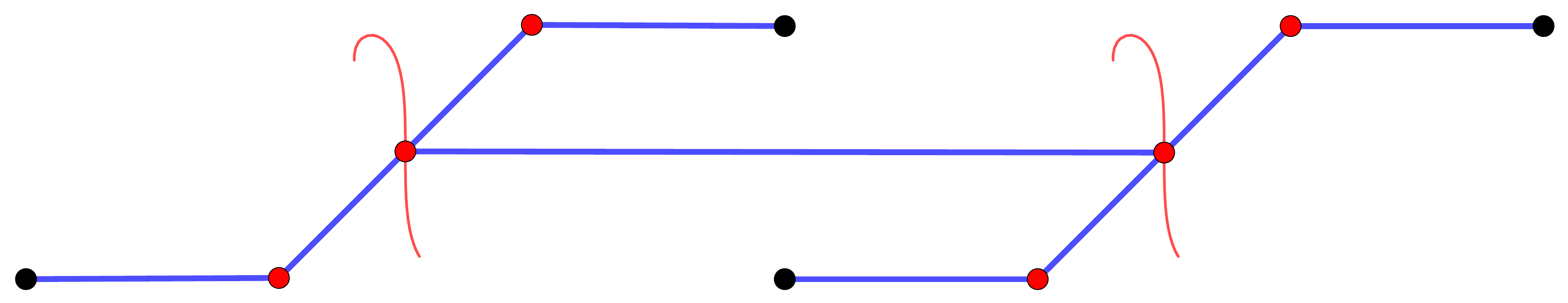}
  \end{overpic} 
\end{center}	
\caption{Double-Watt mechanism of Stachel in a branching configuration; i.e.\ it corresponds to a double point in the configuration space. 
}
  \label{fig:doubleStachel}
\end{figure}

\begin{figure}[t]
\begin{center}
\begin{overpic}
    [height=17.8mm]{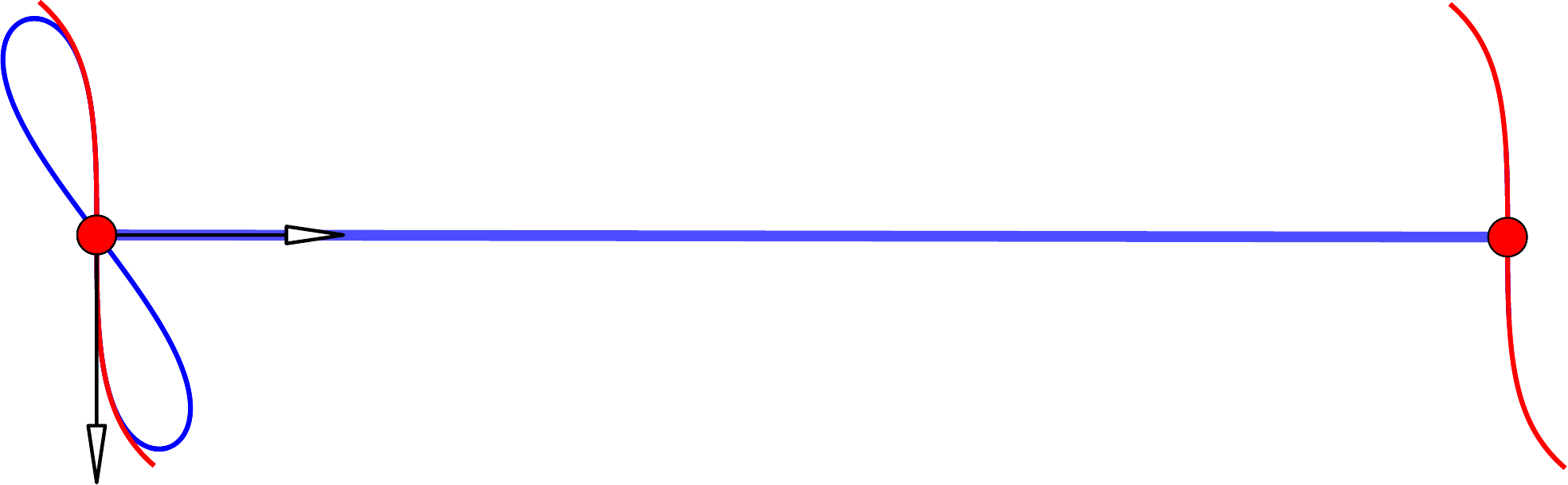}
\begin{scriptsize}
\put(3.5,1.){$x$}
\put(19,18.8){$y$}
\put(0.5,15){$\Vkt x_1$}
\put(98,15){$\Vkt x_2$}
\end{scriptsize}     
  \end{overpic} 
\end{center}	
\caption{Reduction of Stachel's double-Watt mechanism to a two-point guidance problem. 
}
  \label{fig:doubleStachel_reduced}
\end{figure}

Then Stachel extended his double-Watt linkage by a Kempe-mechanism for the generation of the straight line motion of the midpoint $\Vkt x_3$ of $\Vkt x_1$ and $\Vkt x_2$ (see Fig.\ \ref{fig2}). 
In contrast, we only use a point guidance\footnote{This can also be interpreted in the terms of bar-joint framework, where the corresponding pin-joint is the ideal point of the $y$-axis.} to restrict the location of $\Vkt x_3$ on the line $y=\tfrac{3}{2}$ (cf.\ Fig.\ \ref{fig:doubleStachelextended}); i.e.\ 
\begin{equation}\label{eq:x3stachel}
        \Vkt x_3=\begin{pmatrix}
            \tau_3 \\
            \tfrac{3}{2}
        \end{pmatrix}. 
    \end{equation}
For a $n^{th}$-order flexibility at $t=0$ still 
Eq.\ (\ref{eq:F}) has to hold as well as the affine combination
\begin{equation}\label{eq:GH}
    \begin{pmatrix}
    G \\
    H
    \end{pmatrix}:=
    \Vkt x_1(\tau_1)+ \Vkt x_2(\tau_2) - 2  \Vkt x_3(\tau_3) = \Vkt o(t^{n}). 
\end{equation}
We substitute Eq.\ (\ref{eq:taus}) into Eqs.\ (\ref{eq:F}) and 
(\ref{eq:GH}) and consider the coefficients $f_i$, $g_i$ and $h_i$ of $t^i$ in the resulting expressions.
It can easily be seen that $g_i=v_{1,i}+v_{2,i}-2v_{3,i}$ holds for all $i=1,2,\ldots$, thus we set 
\begin{equation}\label{eq3i}
    v_{3,i}=\tfrac{v_{1,i}+v_{2,i}}{2}.
\end{equation}
Moreover we get $f_1=h_1=h_2=0$ and $f_2=(v_{1,1}-v_{2,1})^2$. We set $v_{2,1}=v_{1,1}$ and obtain $f_3=0$ and $h_3=v_{1,1}^3$. 
Therefore this results in a $(1,2)$-flexibility. 

Now we consider the case $v_{1,1}=0$: Then we get $f_4=(v_{1,2}-v_{2,2})^2$. Thus we set $v_{2,2}=v_{1,2}$  and get $h_4=f_5=h_5=0$. Moreover we obtain $f_6=(v_{1,3}-v_{2,3})^2$, implying  $v_{2,3}=v_{1,3}$ and $h_6=v_{1,2}^3$. The latter shows a $(2,5)$-flexibility. 

Now we can set $v_{1,2}=0$ and proceed this procedure yielding the following sequence of flexion orders $(k,3k-1)$ for $k=1,2,\ldots$. 
According to Stachel the question remained open which is the correct order, as all the obtained ones are irreducible. This can be seen as follows:

\begin{figure}[t]
\begin{center}
\begin{overpic}
    [height=37mm]{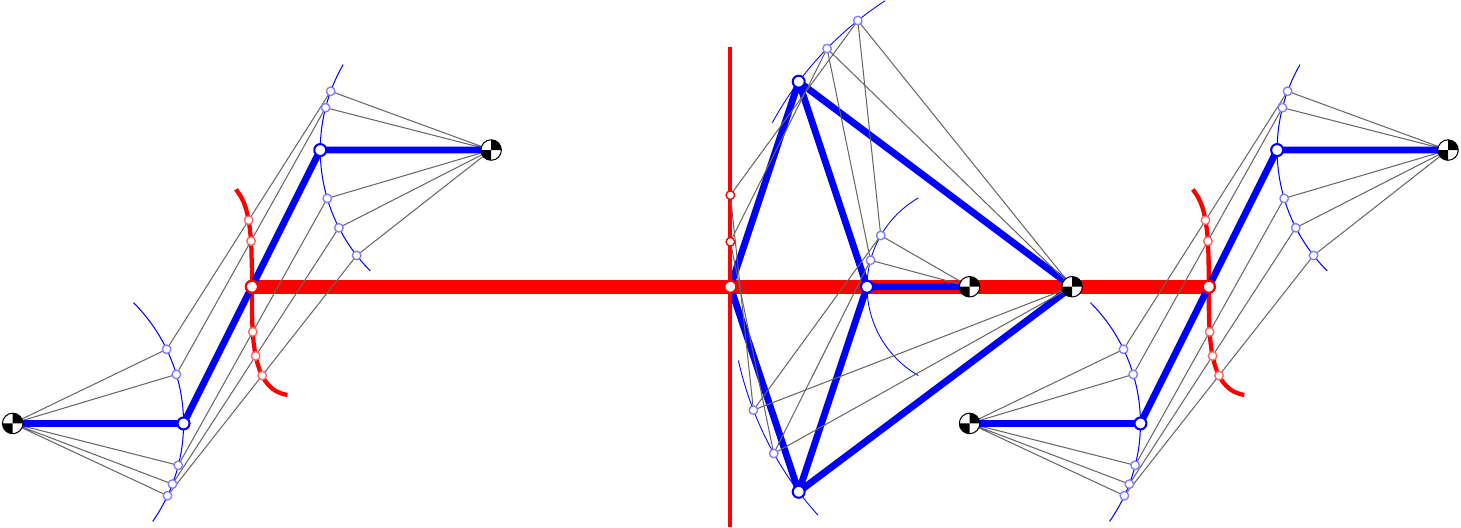}
  \end{overpic} 
\end{center}	
\caption{Stachel's double-Watt mechanism extended by a Kempe-mechanism (Figure by courtesy of Hellmuth Stachel \cite{stachel_aim}).}
  \label{fig2}
\end{figure}

\begin{figure}[t]
\begin{center}
\begin{overpic}
    [height=20mm]{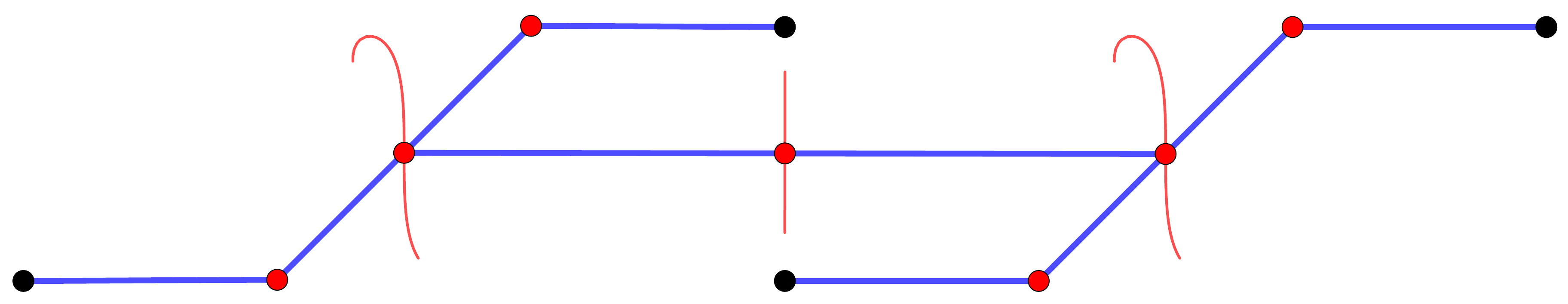}
    \begin{scriptsize}
\put(22.2,9){$\Vkt x_1$}
\put(75.7,9){$\Vkt x_2$}
\put(47,7){$\Vkt x_3$}
\end{scriptsize}     
  \end{overpic} 
\end{center}	
\caption{Stachel's double-Watt mechanism extended by the guidance of the midpoint $\Vkt x_3$ of $\Vkt x_1$ and $\Vkt x_2$ along a straight line.  
}
  \label{fig:doubleStachelextended}
\end{figure}

The conditions for a  $(1,2)$-flex which are 
\begin{equation}\label{eq:c12}
    v_{3,1}=\tfrac{v_{1,1}+v_{2,1}}{2}, \quad
    v_{3,2}=\tfrac{v_{1,2}+v_{2,2}}{2}, \quad
    v_{2,1}=v_{1,1}
\end{equation}
imply under the polynomial parameter substitution of Eq.\ 
(\ref{eq:polysubs}) a reducible $(p,3p-1)$-flexibility. 
Let's do this explicitly for $p=2$. Then we get:
\begin{align}
\overline{v}_{i,2}&=a_0v_{i,1} \\
\overline{v}_{i,3}&=a_1v_{i,1}  \\
\overline{v}_{i,4}&=a_0^2v_{i,2}+a_2v_{i,1} \\
\overline{v}_{i,5}&=2a_0a_1v_{i,2}+a_3v_{i,1}. 
\end{align}
Therefore the conditions for the $(2,5)$-flexibility, which are 
\begin{equation}
\overline{v}_{1,2}-\overline{v}_{2,2}=0, \quad
\overline{v}_{1,3}-\overline{v}_{2,3}=0, \quad
\overline{v}_{1,j}+\overline{v}_{2,j}-2\overline{v}_{3,j}=0
\end{equation}
for $j=2,3,4,5$ are fulfilled identically under 
Eq.\ (\ref{eq:c12}). But  $\overline{v}_{1,4}$ and $\overline{v}_{2,4}$ are in a certain relation as only $a_2$ can act as a free parameter, which in general has not to be the case. This shows the irreducibly of Stachel's $(2,5)$-flexibility. The same argument can be done also for the higher flexion orders in Stachel's sequence $(k,3k-1)$. 
\hfill $\diamond$
\end{example}

\begin{remark}\label{rem:proj}
Note that flexibility of $1^{st}$-order is invariant under projectivities \cite{wegner, wunderlich_shaky} but this does not hold for higher-orders (even not for affine transformations). \hfill $\diamond$
\end{remark}

\begin{remark}\label{rem:2}
Note that it is well known (cf.\ \cite[page 232]{sabitov} and \cite{alexandrov})  that there exists for each geometric structure an upper bound $n^*$ such that the 
$n^*$-order flexibility results in a continuous flexion. \hfill $\diamond$
\end{remark}

\subsection{Structures studied with respect to higher-order flexibility}\label{sec:studstruc}

Wohlhart \cite{wohlhart_degree}  followed Kuznetsov's approach (using a kinematic interpretation of the power-expansion in terms of 
velocity, acceleration, jerk, and so forth) for the study 
of higher-order flexible planar and spatial parallel manipulators of Stewart--Gough type. A deeper geometric study of these
planar mechanisms was done by Stachel in \cite{stachel_planar}, who also studied higher-order flexibility of bipartite planar frameworks \cite{stachel_bipartite}
as well as octahedra \cite{stachel_octahedra}. 
Open and closed spatial serial chains where studied in  \cite{chen,muller,wu}.
Kuznetsov \cite{kuznetsov} and Tarnai \cite{tarnai} demonstrate their theoretical considerations 
only on basis of some simple planar linkages, where the so-called Leonardo structure \cite{leonardo} 
has to be pointed out as in this way frameworks with a ($2^{\lambda}-1$)-order flex (according to Def.\ \ref{def1}) for 
arbitrary $\lambda\in\NN$ can be constructed (cf.\ Fig.\ \ref{fig3}). 
Local rigidity analysis of origami structures up to the $2^{nd}$-order were done by He and Guest \cite{he2021}. A characterization for $2^{nd}$-order flexibility of quad-surfaces with planar faces  was given by Schief et al.\ \cite{BHS08}. Finally, Tachi \cite{tachi_cap} capped rigid-foldable tubes with $2^{nd}$-order flexible structures. 

\begin{figure}[t]
\begin{center}
\begin{overpic}
    [height=25mm]{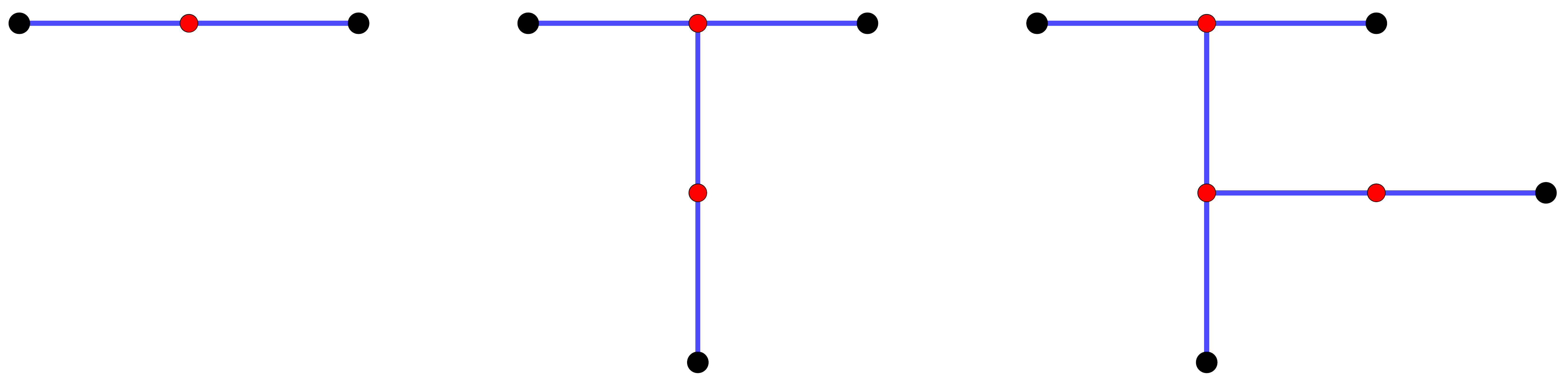}
\begin{scriptsize}
\put(0,20){$F_1$}
\put(11,20){$M_1$}
\put(22,20){$F_2$}
\put(33,20){$F_1$}
\put(45.2,20){$M_1$}
\put(45.5,11){$M_2$}
\put(46,0.5){$F_3$}
\put(54.5,20){$F_2$}
\put(65,20){$F_1$}
\put(77.7,20){$M_1$}
\put(87,20){$F_2$}
\put(77.7,10){$M_2$}
\put(86.7,9.5){$M_3$}
\put(97.5,9.5){$F_4$}
\put(78,0.5){$F_3$}
\end{scriptsize}     
  \end{overpic} 
\end{center}	
\caption{Leonardo structure for ${\lambda}=1$ (left), ${\lambda}=2$ (center) and ${\lambda}=3$ (right).}
  \label{fig3}
\end{figure}   

\begin{remark}\label{rem:wunderlich}
    One should not forget about the work of Walter Wunderlich, who studied the geometry of several shaky structures and sometimes pointed out special ones with a higher-order flexibility (see overview article \cite{stachel_wunderlich}). \hfill $\diamond$
\end{remark}

Due to Remark \ref{rem:2} the idea of higher-order flexibility can also be used to compute over-constrained mechanisms. 
Based on the approach of Kuznetsov this method was stressed by Wohlhart \cite{wohlhart} to 
determine a special class of Stewart--Gough platforms with self-motions and by 
Bartkowiak and Woernle \cite{bart1,bart2} as well as Milenkovic \cite{milenkovic} for the design of overconstrained single-loop mechanisms. 
In contrast, Rameau and Serre \cite{rameau2015} focused on different computational methods of this problem. 
From the computation point of view also the work \cite{wampler} should be mentioned, where numeric algebraic geometry is used to test locally a
so-called {\it high-multiplicity infinitesimal degree of freedom} by means of Macaulay matrices.

\section{Redefinition of a higher-order flexibility and rigidity}\label{sec:redef}
According to M\"uller \cite{muller} the above mentioned examples with cusps in the configuration space (cusp mechanisms) demonstrate the basic shortcoming of any local mobility analysis using higher-order constraints.
Therefore we present a global approach, which is also inspired by an idea of Sabitov like Stachel's approach; namely by
 his finite algorithm for testing the bendability of a polyhedron \cite[page 231]{sabitov}. This  can be formulated as \smallskip follows:

\begin{figure}[t]
\begin{center}
\begin{overpic}
    [height=70mm]{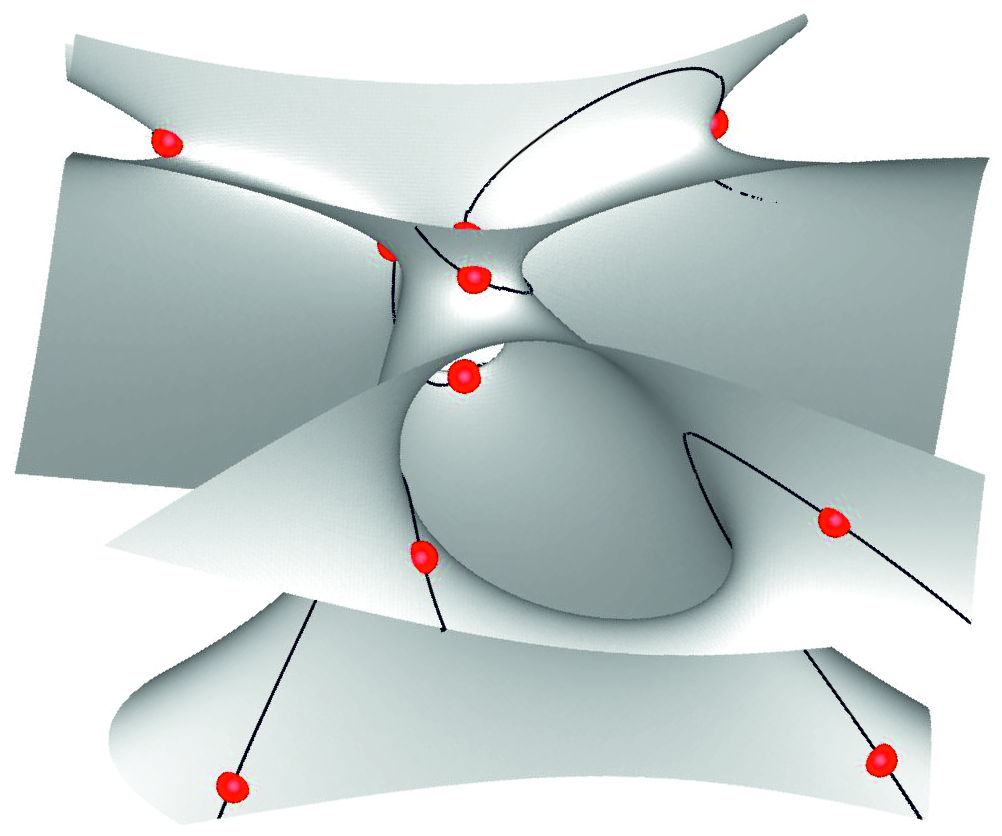}
  \end{overpic} 
\end{center}	
\caption{Illustration of the surface $\mathcal{S}_1$ (gray), the curve $\mathcal{S}_2$ (black) and the discrete set $\mathcal{S}_3$ of points (red) for the configurations of the 3-RPR manipulator discussed in Example \ref{ex:3rpr}.}
  \label{fig:3rpr}
\end{figure}

Let us consider the configuration-set $\mathcal{S}$ of all frameworks having the same connectivity but only differ in their 
intrinsic metric. 
Note that $\mathcal{S}$ is only a subset of 
 $\RR^{m}$ 
(due to the fact that edges are not allowed to have zero length). 
In the case of $1^{st}$-order flexibility each vertex $\Vkt x_i$ ($i=1,\ldots,w$) 
can be associated with a velocity vector $\Vkt x_{i,1}$ such that the edge lengths do not change instantaneously, where the 
set of velocity vectors is not allowed to originate from a rigid body motion (incl.\ standstill); i.e.\ no trivial $1^{st}$-order flex. 
The subset $\mathcal{S}_1\subset\mathcal{S}$ of $1^{st}$-order flexible configurations corresponds to 
the already mentioned shakiness variety $V_1$ in $\RR^{m}$. The sets $\mathcal{S}_j$ with $j>1$ are defined recursively as follows: 
If in a point of $\mathcal{S}_{j-1}$ 
a non-trivial $1^{st}$-order flex exists, which 
is tangential to 
$V_{j-1}$ then this point belongs to the set $\mathcal{S}_{j}$ 
thus we get a  hierarchical structure of 
flexibility of higher-order. 
A configuration is called  $n^{th}$-order flexible if it belongs to $\mathcal{S}_{n}$ but not to \smallskip $\mathcal{S}_{n+1}$.

\noindent
We proceed with a discussion of this approach:
\begin{enumerate}[$\bullet$]
    \item 
    This approach goes along with a recent result of Alexandrov \cite{alexandrov_new} for smooth surfaces, who was able to show that a $1^{st}$-order flex 
    tangential to $V_1$ can be extended to a $2^{nd}$-order flex.
     \item 
    Sabitov assumed that all the appearing sets $\mathcal{S},\mathcal{S}_1,\mathcal{S}_2,\ldots$ are manifolds and submanifolds, respectively. In general the varieties $V_1, V_2, \ldots$ contain singular points, which correspond mostly to the interesting configurations in the study of higher-order flexibility. 
    \item 
    An analogous assumption has to be done by Alexandrov \cite{alexandrov_new} in the smooth setting mentioned above, namely the restriction to regular points of $V_1$. 
\end{enumerate}

This means that this approach gives a proper definition of  $n^{th}$-order flexibility for configurations that correspond to points of $\RR^{m}$  which are regular with respect to each of the varieties $V_1, V_2, \ldots, V_n$.

\begin{lemma}\label{lem:1}
Every regular point of $V_1$  has to have a  single non-trivial instantaneous flexion. 
\end{lemma}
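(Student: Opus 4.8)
The plan is to translate the statement into a rank condition on the rigidity matrix and then reduce it to the classical fact that the singular locus of a determinantal variety is the next-smaller determinantal variety. Since the isometry-eliminating constraints are already among the $c_i$, every vector $\Vkt v\in\RR^m$ annihilated by the tangent hyperplanes, i.e.\ $\Vkt R_{G(\Vkt X)}^{T}\Vkt v=\Vkt o$, is a genuine (non-trivial) instantaneous flex, and the space of such flexes has dimension $m-rk(\Vkt R_{G(\Vkt X)})$. By the very definition of $V_1$ through the vanishing of all $\mu$ maximal minors $p_1,\ldots,p_\mu$, every configuration in $V_1$ satisfies $rk(\Vkt R_{G(\Vkt X)})\le m-1$, so the flex space is at least one-dimensional. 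Hence \emph{a single non-trivial instantaneous flexion} is equivalent to $rk(\Vkt R_{G(\Vkt X)})=m-1$, and the whole lemma reduces to showing that a deeper rank drop $rk(\Vkt R_{G(\Vkt X)})\le m-2$ can occur only at a singular point of $V_1$.

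First I would fix a configuration $\Vkt X_0\in V_1$ with $rk(\Vkt R_{G(\Vkt X_0)})\le m-2$ and differentiate an arbitrary defining minor $p=\det(\Vkt R_J)$, where $\Vkt R_J$ is the $m\times m$ submatrix formed by the columns indexed by some $J\subset\{1,\ldots,l\}$ with $|J|=m$. Using the column-expansion of the derivative of a determinant,
\[
\frac{\partial p}{\partial z_k}=\sum_{j\in J}\det(\Vkt R_J^{(j,k)}),
\]
where $\Vkt R_J^{(j,k)}$ is $\Vkt R_J$ with its $j$-th column replaced by the $z_k$-derivative of that column. The key observation is that each summand still contains $m-1$ unchanged columns of $\Vkt R_{G(\Vkt X_0)}$; but if $rk(\Vkt R_{G(\Vkt X_0)})\le m-2$ then \emph{any} $m-1$ of its columns are linearly dependent, so every such determinant vanishes. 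Consequently all first partials $\partial p/\partial z_k$ vanish at $\Vkt X_0$, i.e.\ the Jacobian of $(p_1,\ldots,p_\mu)$ is the zero matrix there.

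Then I would invoke the Jacobian criterion. Because $V_1$ is a proper subvariety of $\RR^m$ (not every configuration is shaky), its component through $\Vkt X_0$ has codimension at least one, so a regular point would force the Jacobian of the defining equations to have rank at least one. As this Jacobian vanishes identically at $\Vkt X_0$, the point is singular. Taking the contrapositive, every regular point of $V_1$ satisfies $rk(\Vkt R_{G(\Vkt X)})=m-1$ and therefore carries a one-dimensional, hence essentially unique, non-trivial instantaneous flex, which is exactly the assertion of the lemma.

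I expect the main obstacle to be conceptual rather than computational, namely pinning down what ``regular point of $V_1$'' should mean so that the Jacobian criterion genuinely applies. Care is needed because the maximal minors $p_1,\ldots,p_\mu$ need not generate a radical ideal, so strictly one should either pass to the reduced structure of $V_1$ or appeal to the classical result that the ideal of maximal minors of a (sufficiently generic) matrix is prime and has the next determinantal variety as its singular locus, and then transport this along $\Vkt X\mapsto\Vkt R_{G(\Vkt X)}$. A secondary point worth recording is that no subtlety with trivial flexes can re-enter, precisely because the isometry constraints have been folded into the $c_i$ from the outset.
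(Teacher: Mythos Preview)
Your proof is correct and follows essentially the same line as the paper's: both differentiate the $m\times m$ minors via the column-wise product rule for determinants and observe that each summand retains $m-1$ original columns of $\Vkt R_{G(\Vkt X)}$, hence vanishes whenever $rk(\Vkt R_{G(\Vkt X)})\le m-2$, forcing all gradients $\nabla p_j$ to be zero at such points. Your write-up is in fact more careful than the paper's, as you explicitly spell out the rank--flex-dimension correspondence, invoke the Jacobian criterion, and flag the potential subtlety that the minor ideal need not be radical---a point the paper simply passes over.
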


\begin{proof}
Let recall that $V_1$ is the zero set of the ideal generated by all minors $p_{1}, \ldots, p_{\mu}$ of $\Vkt R_{G(\Vkt X)}$ of order $m\times m$. 

Let $p_{j}$ equals the $\det(\Vkt r_1,\Vkt r_2, \ldots ,\Vkt r_m)$ where the $\Vkt r_i$'s denote pairwise distinct columns of the rigidity matrix $\Vkt R_{G(\Vkt X)}$ given in Eq.\ (\ref{eq:rigidityM}).

Now the entries of the gradient  of $p_{j}$, which is given by 
\begin{equation}
\nabla p_{j}=
\left( \tfrac{\partial p_{j}}{\partial z_1}, \tfrac{\partial p_{j}}{\partial z_2}, \ldots ,\tfrac{\partial p_{j}}{\partial z_m}\right),     
\end{equation}
can be computed due to the following product rule for determinants \cite[page 626]{adams}: 
\begin{equation}\label{detpart}
   \tfrac{\partial p_{j}}{\partial z_i}=
   \det(\tfrac{\partial \Vkt r_1}{\partial z_i},\Vkt r_2, \ldots, \Vkt r_m) + 
   \det(\Vkt r_1,\tfrac{\partial \Vkt r_2}{\partial z_i}, \ldots, \Vkt r_m) + \ldots +
   \det(\Vkt r_1,\Vkt r_2,\ldots,\tfrac{\partial \Vkt r_m}{\partial z_i}). 
\end{equation}
This already shows that for points 
 of $V_1$ with $rk(\Vkt R_{G(\Vkt X)})<m-1$ all these gradients $\nabla p_{1,j}$ are zero vectors, as all summands of Eq.\ (\ref{detpart})  are zero. As a consequence, these points have to be singular ones of 
  $V_1$. 
\hfill $\BewEnde$
\end{proof}

\begin{remark}
Lemma \ref{lem:1} explains Husty's observation for 3-RPR mechanisms given in \cite{husty2015}; namely ``{\it the surprising property that it} (singularity surface) {\it has a singularity itself at the point which corresponds to the pose with two dof local mobility}.''     

Moreover, this lemma also gives another reasoning for the note in \cite{steven} that the transverse rigidity test always fails if more than one non-trivial infinitesimal flex exist, 
as in this case the corresponding point on $V_1$ has to be a singular one. 
\hfill $\diamond$
\end{remark}

\begin{figure}[t]
\begin{center}
\begin{overpic}
    [height=27mm]{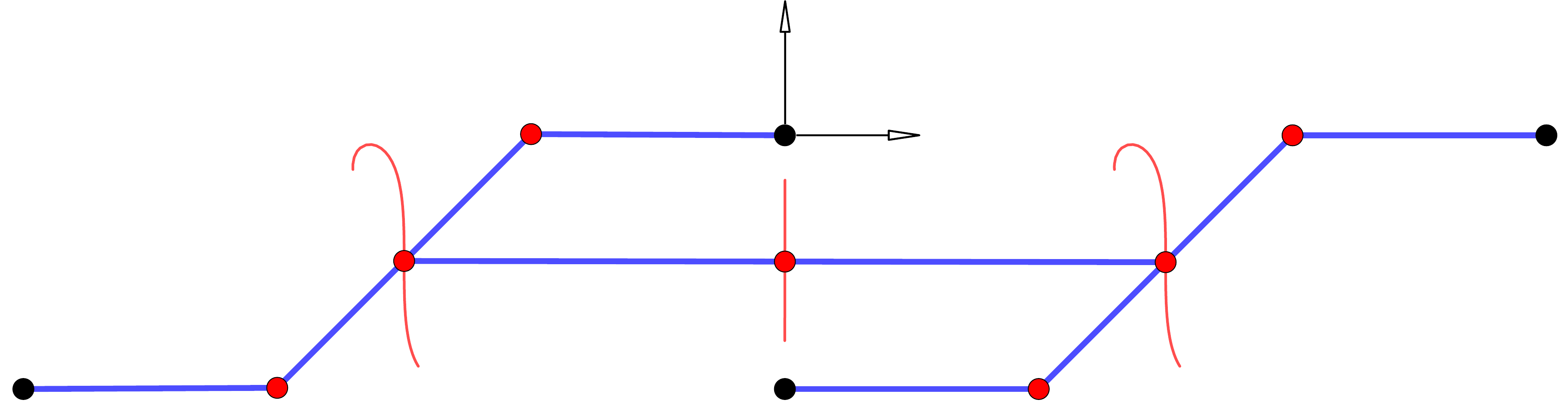}
    \begin{scriptsize}
    \put(57.5,18.7){$x$}  
    \put(51,25){$y$}  
\put(47,19){$F_0$}  
\put(32,19){$M_0$}  
\put(98,19){$F_2$}  
\put(81,19){$M_2$}  
\put(0.5,3){$F_1$} 
\put(15.5,3){$M_1$}
\put(46.8,1){$F_3$} 
\put(64,3){$M_3$}    
\put(21,9.5){$N_{0,1}$}
\put(75.7,9){$N_{2,3}$}
\end{scriptsize}     
  \end{overpic} 
\end{center}	
\caption{Stachel's double-Watt mechanism extended by the guidance of the midpoint $\Vkt x_3$ of $\Vkt x_1$ and $\Vkt x_2$ along a straight line.   
}
  \label{fig:doubleStachelextendedcoord}
\end{figure}    

\begin{example}\label{ex:watt4}
Let us consider Stachel's extended double-Watt framework introduced in Example \ref{ex:watt3}. For setting up the algebraic equations we use the following coordinatization according to Fig.\ \ref{fig:doubleStachelextendedcoord}: 
\begin{equation}
    F_0=(0,0)^T,  \quad
    F_1=(-3,-1)^T, \quad 
    F_2=(3,0)^T,  \quad
    F_3=(0,-1)^T,  
\end{equation}
for the points pinned to the base and 
\begin{equation}
    M_0=(a_0,b_0)^T,  \quad
    M_1=(a_1,b_1)^T, \quad 
    M_2=(a_2,b_2)^T,  \quad
    M_3=(a_3,b_3)^T,  
\end{equation}
for the moving points. Then the mechanism is determined by the following set of eight equations:
\begin{align}
&\|F_i-M_i\|^2=1 &\mathtext{for} &i=0,\ldots , 3 \label{eq:first}\\
&\|M_j-M_{j+1}\|^2=2 &\mathtext{for} &j=0,2\\ 
&\|N_{0,1}-N_{2,3}\|^2=9 &\phm \\
&a_0+a_1+a_2+a_3=0 \label{eq:last}
\end{align}
with $N_{0,1}=\tfrac{M_0+M_1}{2}$   and 
$N_{2,3}=\tfrac{M_2+M_3}{2}$. Note that Eq.\ (\ref{eq:last}) corresponds with 
the straight line motion of the midpoint of $N_{0,1}$ and 
$N_{2,3}$. 
Direct computations show that the rank of the $(8\times 8)$-rigidity matrix $\Vkt R_{G(\Vkt X)}$ in the configuration $\Vkt X$ given by
\begin{equation}\label{eq:cuspconf}
    (a_0,b_0,a_1,b_1,a_2,b_2,a_3,b_3)=(-1,0,-2,-1,2,0,1,-1)
\end{equation}
equals $6$. This confirms that $\Vkt X$ is a singular point of the variety $V_1$. \hfill $\diamond$
\end{example}

Beside Lemma \ref{lem:1} we also have to keep in mind that according to  
Remark \ref{rem:2} a certain flexibility order $n^*$ exists which implies flexibility of order $\infty$. 
With this ingredients  we can prove the following alternative characterization:

\begin{theorem}\label{thm1}
 The $n^{th}$-order flexibility with $n<n^*$ of a configuration which corresponds to a regular point of each variety $V_1, V_2, \ldots, V_n$
 is equivalent with the fact that the configuration is a framework realization of multiplicity $n+1$. 
\end{theorem}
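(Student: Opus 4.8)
The plan is to reduce the statement to a local analysis at the configuration $\Vkt X_0$ and to match the recursive tangency conditions defining $\mathcal{S}_1\supset\mathcal{S}_2\supset\cdots$ with the iterated directional derivatives that control the intersection multiplicity. First I would read the $l$ constraints as a single map $\Vkt g\colon\RR^{m}\to\RR^{l}$ whose Jacobian transpose is the rigidity matrix $\Vkt R_{G(\Vkt X)}$, so that the phrase ``framework realization of multiplicity $n+1$'' means that $\Vkt X_0$ is an isolated point of the fibre $\Vkt g^{-1}(\Vkt g(\Vkt X_0))$ whose local intersection multiplicity (the length of the local ring) equals $n+1$. Since $\Vkt X_0$ is by hypothesis a regular point of $V_1$, Lemma \ref{lem:1} forces $rk(\Vkt R_{G(\Vkt X_0)})=m-1$; hence the space of non-trivial instantaneous flexes is one-dimensional, and the kernel direction $\Vkt v$ extends to a smooth vector field on a neighbourhood of $\Vkt X_0$. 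The hypothesis $n<n^{*}$ from Remark \ref{rem:2} guarantees that we stay below the threshold at which flexibility becomes a genuine continuous motion, so the fibre remains zero-dimensional at $\Vkt X_0$ and the multiplicity is finite.

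The key step is an inductive identification of the varieties. Writing $\phi_1$ for a local defining function of the hypersurface $V_1$ (in the isostatic case $\phi_1=p_1=\det\Vkt R_{G(\Vkt X)}$, the single minor of Section \ref{sec:algapproach}) and setting $\phi_{j+1}:=\Vkt v\,\phi_j=\langle\nabla\phi_j,\Vkt v\rangle$, I claim that near $\Vkt X_0$ one has $V_j=\{\phi_1=\cdots=\phi_j=0\}$. Indeed, on $V_{j-1}$ the vector $\Vkt v$ already lies in the common kernel of $d\phi_1,\dots,d\phi_{j-2}$, because $d\phi_i(\Vkt v)=\phi_{i+1}=0$ there; the only remaining condition for $\Vkt v$ to be tangent to $V_{j-1}$ is $d\phi_{j-1}(\Vkt v)=\phi_j=0$, which is exactly the recursive passage from $\mathcal{S}_{j-1}$ to $\mathcal{S}_j$. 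One also checks that $\phi_j|_{V_{j-1}}$ is independent of the chosen extension of $\Vkt v$: any two extensions differ by a field vanishing on $V_1$, and an induction shows the resulting discrepancy in $\phi_j$ is a combination of $\phi_1,\dots,\phi_{j-1}$, hence vanishes on $V_{j-1}$. Regularity of $V_1,\dots,V_n$ is precisely the independence of $d\phi_1,\dots,d\phi_n$, so the chain is strictly descending and $\Vkt X_0\in\mathcal{S}_n\setminus\mathcal{S}_{n+1}$ becomes equivalent to $\phi_1(\Vkt X_0)=\cdots=\phi_n(\Vkt X_0)=0\neq\phi_{n+1}(\Vkt X_0)$.

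It then remains to identify this last condition with multiplicity $n+1$. Because the singular point has corank one, the local degree of $\Vkt g$ is concentrated in the single kernel direction, and the conditions $\phi_1=\cdots=\phi_n=0\neq\phi_{n+1}$ are exactly those cutting out an $A_n$ (Morin) singularity, whose normal form has fibre multiplicity $n+1$; this yields both implications at once. Equivalently, and closer to the computations of Examples \ref{ex:watt1}--\ref{ex:watt4}, restricting the edge-length function to an arc tangent to $\Vkt v$ turns $\phi_{j+1}$ into the obstruction at order $j+1$, so multiplicity $n+1$ is the order of vanishing ``$F=o(t^{n})$ but not $o(t^{n+1})$'' read off there.

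The main obstacle I expect is the overbraced case $l>m$, where $V_1$ is a determinantal variety of codimension $l-m+1$ rather than a hypersurface, so there is no single $\phi_1$. Here I would again localise via Lemma \ref{lem:1}: at a corank-one point one selects $m-1$ constraints with independent gradients, cutting out a smooth flex-curve tangent to $\Vkt v$, and restricts the remaining constraints to this curve, whereupon the intersection multiplicity of all $l$ hypersurfaces reduces once more to the iterated conditions $\phi_j$ along the curve. Making this reduction precise --- and verifying that the well-definedness argument for $\phi_j|_{V_{j-1}}$ survives when $V_1$ is no longer a smooth hypersurface --- is the delicate part, together with checking that the genericity assumptions hidden in the $A_n$ normal form are supplied exactly by the regularity of each $V_1,\dots,V_n$.
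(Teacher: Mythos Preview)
Your argument and the paper's share the same inductive skeleton---each step $\mathcal{S}_{j-1}\to\mathcal{S}_j$ is read as raising the intersection multiplicity of $c_1,\dots,c_l$ by one, with Lemma~\ref{lem:1} ensuring a unique kernel direction~$\Vkt v$ throughout---but you implement it with considerably more machinery. The paper's proof is a short verbal iteration: points of $V_1$ have multiplicity~$\ge 2$; tangency of~$\Vkt v$ to $V_1$ is declared ``necessary and sufficient'' to push this to~$\ge 3$; and so on up to $V_{n^*}$, with no explicit defining functions and no normal form. Your construction of the tower $\phi_{j+1}=\langle\nabla\phi_j,\Vkt v\rangle$, the verification that $V_j$ is locally $\{\phi_1=\cdots=\phi_j=0\}$ and is independent of the extension of~$\Vkt v$, and the identification of the resulting corank-one germ with a Morin $A_n$ singularity, supply precisely the rigour that the paper's phrase ``tangent spaces have a positive-dimensional subspace in common'' leaves implicit: the regularity hypotheses become the transversality that forces the $A_n$ normal form, and the fibre multiplicity $n{+}1$ then drops out of singularity theory rather than from a geometric heuristic. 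Your explicit treatment of the overbraced case $l>m$ (passing to a flex-curve cut out by $m{-}1$ independent constraints) is likewise absent from the paper. What the paper's version buys is brevity and a direct geometric picture; what yours buys is an argument that can actually be checked, together with a clear signpost to where the delicate points lie.
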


\begin{proof}
    $V_1$ is the set of points determined by the constraint equations $c_1,\ldots ,c_l$ with multiplicity of at least two. In a general point $\Vkt X$ of $V_1$ the intersection multiplicity with respect to  $V_1\cap c_1\cap \ldots \cap c_l$ is 1. For increasing it a necessary and sufficient condition is that the tangent spaces have a positive-dimensional subspace in common. This is exactly the condition that in $\Vkt X$ an instantaneous flexion exists, which is tangential to  $V_1$. Due to Lemma \ref{lem:1} the common subspace has to be 1-dimensional.     Therefore by construction 
    a generic element of the variety $V_2$ has to have multiplicity 3 with respect to $V(c_1,\ldots ,c_l)$. 

    This line of argumentation can be iterated until we reach the set  $V_{n^*}$, which consists of points having multiplicity $\infty$. Thus points of $V_n\setminus V_{n+1}$ with $n<n^*$ have to correspond with framework realizations of multiplicity $n+1$. 
 \hfill $\BewEnde$
\end{proof}

A redefinition can be based on this property as it can also be extended to  singular points of the varieties $V_1, V_2, \ldots$ which are not covered by Sabitov's algorithm.

\begin{definition} \label{def3}
If a configuration does not belong to a continuous flexion of the framework then we define its order of flexibility by the number of coinciding framework realizations minus 1. 
\end{definition}

\begin{remark}
This definition follows the way Wunderlich (cf.\ Remark \ref{rem:wunderlich}) studied infinitesimal flexibility; namely as the limiting case where two realizations of a framework coincide (cf.\ Stachel \cite{stachel_between}). \hfill $\diamond$
\end{remark}

Based on Definition \ref{def3} we can also give a redefinition of higher-order rigidity as follows:

\begin{definition} \label{def4}
Is a configuration $n^{th}$-order flexible according to Definition \ref{def3} then it is $(n+1)$-rigid. 
\end{definition}

\subsection{Computational Aspects}\label{sec:algorithm}

As we now have obtained proper redefinitions of higher-order flexibility and rigidity, we remain with the problem of how  to compute the 
number of coinciding realizations.  
For that we have to calculate the intersection multiplicity of the hypersurfaces $c_1,\ldots ,c_l$ in the considered configuration $\Vkt X$. 
For the determination of the flexion order we suggest the following 3-step algorithm:

\begin{enumerate}
    \item 
    According to the Lasker–Noether theorem every algebraic set is the union of a finite number of uniquely defined algebraic sets known as irreducible components. They can be computed with an irredundant   primary decomposition\footnote{The prime decomposition is not valid as it does not preserve the intersection multiplicity.} algorithm (see e.g.\ \cite{zijia}). 
    \item 
    Then one has to test if the given realization is contained in a irreducible composition of dimension 1 or higher. If this is the case the configuration $\Vkt X$ is assigned with the flexion order $\infty$ (in accordance with Fulton's properties \cite{fulton} of an intersection number). If this is not the case then we identify all zero-dimensional primary ideals $I_1,\ldots, I_s$
    containing $\Vkt X$.
    \item 
    We compute the intersection multiplicity $q_i$ of $\Vkt X$ with respect to each primary ideal $I_i$ for $i=1,\ldots ,s$. Then the intersection multiplicity of $\Vkt X$ with respect to the hypersurfaces $c_1,\ldots ,c_l$ equals the sum $q_1+\ldots +q_s$.
\end{enumerate}

\begin{remark}\label{rem:unique}
According to Definition \ref{def3} the flexion order equals  $q_1+\ldots +q_s-1$, but if one is interested in a more detailed analysis of the configuration and its flexion order, then one should have a look at the sequence $(q_1,\ldots ,q_s)$. 
It is well known that the irredundant primary decomposition has not to be unique; but in our case we are save as we assumed that all primary ideals containing $\Vkt X$ are zero-dimensional. Therefore they have to correspond to minimal prime ideals and not to embedded ones, which are causing non-uniqueness (cf.\ \cite{zijia}). 
\hfill $\diamond$    
\end{remark}

In the following we sketch a possibility for the computation of $q_i$. Let us assume that the zero-dimensional primary ideal $I_i$ is generated by polynomials $g_1,\ldots, g_{\gamma}$. We distinguish the following two cases: 
\begin{enumerate}[a)]
\item
 If $\gamma=m$; i.e.\ $I_i$ is a complete intersection, then we can use theoretically the U-resultant method (see \cite[§\,18]{macaulay}, \cite[§\,83]{waerden} or \cite{kirby}), which works as follows:
One adds the so-called U-polynomial
\begin{equation}
    g_0=u_0+u_1z_1+\ldots +u_mz_m
\end{equation}
to the set $g_1,\ldots, g_m$ and eliminates $z_1,\ldots,z_m$
by means of Macaulay resultant \cite{macaulay_matrix}.
This results in a homogeneous polynomial $R(g_0,\ldots, g_m)$ where the degree equals the product of the degrees of $g_1,\ldots,g_m$. Moreover, $R(g_0,\ldots, g_m)$ factorizes into powers of $f$ linear factors 
\begin{equation}\label{eq:prod}
    \prod_{j=1}^f \left(\zeta_{j,0} u_0+\zeta_{j,1}u_1+\ldots +\zeta_{j,m} u_m \right)^{q_j}.
\end{equation}
Then the $j$th common point of $g_1,\ldots, g_m$ has multiplicity $q_j$ and its coordinates are given by $z_i=\zeta_{j,i}/\zeta_{j,0}$ for $i=1,\ldots,m$. 
\item 
If $\gamma>m$ one can use a generalization of the U-resultant method given by Lazard \cite{lazard} to end up with an expression of the form given in Eq.\ (\ref{eq:prod}).
\end{enumerate}
Let us demonstrate the above algorithm for the already mentioned Leonardo structure \cite{leonardo}.

\begin{example} \label{ex:tarnai}
According to Tarnai \cite{tarnai}  these frameworks with a ($2^{\lambda}-1$)-order flex can be generated by an iterative procedure. In the following we demonstrate this for ${\lambda}=1$, ${\lambda}=2$ and ${\lambda}=3$ (cf.\ Fig.\ \ref{fig3}), using the following coordinatization: 
\begin{equation}
    F_1=(-1,0)^T,\quad F_2=(1,0)^T,\quad 
F_3=(0,-2)^T,\quad F_4=(2,-1)^T,  
\end{equation}
for the points pinned to the base and 
\begin{equation}
    M_1=(a,b)^T,\quad  M_2=(c,d)^T,\quad  M_3=(e,f)^T,
\end{equation}
for the moving points.  
\begin{enumerate}[$\bullet$]
    \item 
    ${\lambda}=1$: In this case one has to solve the two equations $\|M_1-F_i\|^2=1$ for $i=1,2$, which read after homogenizing with $h$ as:
    \begin{equation}\label{eq:step1}
        a^2+2ah+b^2=0,\quad a^2-2ah+b^2=0.
    \end{equation}
    The primary decomposition of the ideal spanned by these two equations yields the two primary ideals $I_1^1=\langle a,b^2\rangle$ and  $I_2^1=\langle h,a^2+b^2\rangle$. Only $I_1^1$, which is zero-dimensional, contains the considered configuration $\Vkt X$ having homogeneous coordinates $(h:a:b)=(1:0:0)$. Computation of the U-resultant (with {\sc Macaulay2}) yields $u_0^2$, which shows that the configuration has multiplicity 2 and therefore a $1^{st}$-order flexion.
    \item
     ${\lambda}=2$: In addition to Eq.\ (\ref{eq:step1}) one has to consider the two conditions $\|M_2-M_1\|^2=1$ and $\|M_2-F_3\|^2=1$, which read after homogenizing with $h$ as:
     \begin{equation}\label{eq:step2}
     a^2 - 2ac + b^2 - 2bd + c^2 + d^2 - h^2=0, \quad
        c^2 + d^2 + 4dh + 3h^2=0.
    \end{equation}
    The primary decomposition (operated by {\sc Maple 2022}) of the ideal spanned by Eqs.\ (\ref{eq:step1}-\ref{eq:step2}) yields the following primary ideals
    \begin{equation}
    \begin{split}
     I_1^2=\langle &a, b^2, b - 2d - 2h, bh + c^2 \rangle, \\ 
     I_2^2=\langle &h, b^2 + a^2, d^2 + c^2,  bc-ad, bd + ca\rangle, \\
     I_3^2=\langle &a^3, h^2, ah, a^2b, hb, b^2 + a^2, ad - bc - 2ch, ac + bd + 2dh, c^2 + d^2-2ac - 2bd \rangle, \\
     I_4^2=\langle &a^4, c^5, h^6, ah, a^3c^4, a^3bc, hbc^2, b^2 + a^2, bh^3 + c^2h^2, 2cdh^2 -bch^2 + 2ch^3, \\ &2dh^3 -bh^3 + 2h^4,  c^2 + d^2 + 4dh + 3h^2, a^2d - abc - 2bh^2 + 4dh^2 + 4h^3, \\&ac + bd + 2dh + 2h^2, acd - bc^2 - 3bh^2 - 2c^2h + 2dh^2 + 2h^3\rangle.
     \end{split}
    \end{equation}
    Again only $I_1^2$, which is zero-dimensional, contains the considered configuration $\Vkt X$ having homogeneous coordinates $(h:a:b:c:d)=(1:0:0:0:-1)$. Computation of the U-resultant (with {\sc Macaulay2}) yields $2^4(u_0-u_4)^4$. This validates the $3^{rd}$-order flexion.
    \item 
    ${\lambda}=3$: In addition to Eqs.\ (\ref{eq:step1}) and (\ref{eq:step2}) one has to consider the two conditions $\|M_3-M_2\|^2=1$ and $\|M_3-F_4\|^2=1$, which read after homogenizing with $h$ as:
     \begin{equation}\label{eq:step3}
        c^2 - 2ce + d^2 - 2df + e^2 + f^2 - h^2, \quad
        e^2 - 4eh + f^2 + 2fh + 4h^2.
    \end{equation}
    The primary decomposition (operated by {\sc Maple 2022}) of the ideal spanned by Eqs.\ (\ref{eq:step1},\ref{eq:step2},\ref{eq:step3}) contains only\footnote{The other primary ideals with $h=0$ are not given due to their length.} one primary ideal with $h\neq 0$, which reads as:
    \begin{equation}\label{eq:wrongI}
        I_1^3=\langle a, b, c^2, c - 2e + 2h, 2e - c + 2d, e^2 + 2ef + f^2-2ce - cf   \rangle.
    \end{equation}
    But this cannot be correct as the U-resultant (with {\sc Macaulay2}) yields 
    $2^8(u_0-u_4+u_5-u_6)^4$, which shows only a 4-fold realization at the considered configuration $\Vkt X$ having homogeneous coordinates $(h:a:b:c:d:e:f)=(1:0:0:0:-1:1:-1)$. 
    
    We did a recheck following the idea of \cite{weil} by slightly perturbating  the system of equations. Then it can easily be seen that there are 8 solutions\footnote{This number can additionally be verified by the {\tt IntersectionMultiplicity} command implemented in {\sc Maple 2022}.} in the neighborhood of $\Vkt X$. 
    
     This shows up a problem of the {\tt PrimaryDecomposition} command in {\sc Maple 2022}.     
    In order to correct $I_1^3$ of Eq.\ (\ref{eq:wrongI}) one has to replace $b$ by $b^2$ (as this is the case in $I_1^1$ and $I_1^2$). Then the U-resultant (operated with {\sc Macaulay2}) yields the expected expression $2^{16}(u_0-u_4+u_5-u_6)^8$.  \hfill $\diamond$
\end{enumerate}
\end{example}

\begin{example}\label{ex:watt5}
Continuation of Example \ref{ex:watt4}:
The primary decomposition (operated by {\sc Maple 2022}) of the ideal spanned by Eq.\ (\ref{eq:first}--\ref{eq:last}) yields only one zero-dimensional primary ideal $I$ containing the configuration $\Vkt X$ of Eq.\ (\ref{eq:cuspconf}); namely
\begin{equation}\label{eq:idealI}
\begin{split}
 I=\langle
&(1 + a_0)^2, (a_2 - 2)^2, (3 - a_2)^2 + b_2^2 - 1, (a_0 + 3)a_1 + 5 + (b_0 + 1)b_1, \\
&(a_3 - 3)a_2 + 5 + (b_2 + 1)b_3, a_1 + a_0 + a_3 + a_2, a_0^2 + b_0^2 - 1, \\ 
& (6-2a_3)a_2 + a_3^2 - 2b_2b_3 + b_3^2 - 10,   a_1^2-2a_0a_1 - 2b_0b_1 + b_1^2 - 1, \\
&(a_0 - a_2 - a_3 - 3)a_1 + (3-a_0 + a_3)a_2 + (b_0 - b_2 - 
b_3 - 1)b_1 + \\
&(b_2-b_0  - 1)b_3 - a_0a_3 - b_0b_2 - 26
 \rangle .      
\end{split}
\end{equation}
As this ideal has more than eight generators, we cannot apply the U-resultant method as done in Example \ref{ex:tarnai}. 
As we are not aware of any implementation of the generalized U-resultant method of Lazard \cite{lazard}, we proceeded as follows: The ideal $I$ of Eq.\ (\ref{eq:idealI}) only has the 
solution $\Vkt X$ and we determined its multiplicity by the {\sc  Maple} command {\tt NumberOfSolutions}\footnote{In {\sc Maple 2022} there is no documentation on how the command {\tt NumberOfSolutions} works.}
of the {\tt PolynomialIdeals} package, which yields $6$. 

As one cannot trust for sure the {\tt PrimaryDecomposition} command in {\sc Maple 2022} as demonstrated in Example \ref{ex:tarnai}, we did again a recheck by the pertubation approach of \cite{weil}, which confirms multiplicity 6.
According to Definition \ref{def3} this implies a flexion of order 5.  
\hfill $\diamond$
\end{example} 

\begin{example}\label{ex:watt6}
We can also force the midpoint $\Vkt x_3$ of $\Vkt x_1$ and $\Vkt x_2$ of the original double-Watt mechanism of Connelly and Servatius (cf.\ Example \ref{ex:watt1}) to run on a vertical line. In analogy to Example \ref{ex:watt3} one can use Stachel's approach, which yields the sequence of flexion orders $(k,3k-1)$ for odd $k$ and $(k,3k+\tfrac{k}{2}-1)$ for even $k$.

Similar considerations as in Example \ref{ex:watt5} show, that in this case seven solutions\footnote{A slight perturbation of the system of equations shows that seven solutions converge against the given configuration. Note that in this case the {\tt PrimaryDecomposition} command in {\sc Maple 2022} does not work as the resulting solution is only sixfold and also the {\tt IntersectionMultiplicity} command fails for all possible $8!=40320$ permutations.} coincide, yielding flexion order 6. 
\hfill $\diamond$
\end{example} 

\begin{remark}
    The flexion order of the frameworks discussed in Examples \ref{ex:watt5} and \ref{ex:watt6} can be raised from  5 and 6  to 9 and 14, respectively, by modifying the dimension of the used Watt-linkage in a way that the coupler is vertical (i.e.\ the coupler is tangential to the considered branch) but the arms remain horizontal in the considered configuration.   
	\hfill $\diamond$
\end{remark}

\begin{figure}[t]
\begin{center}
\begin{overpic}
    [height=20mm]{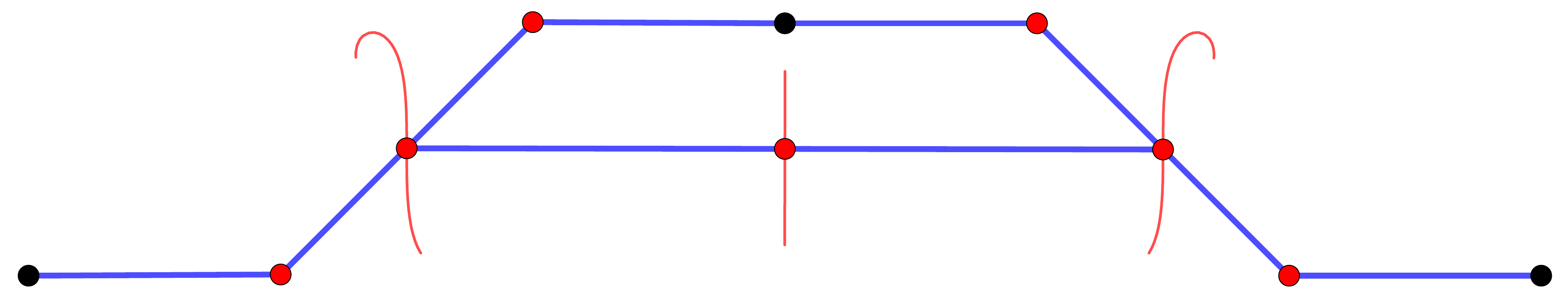}
\begin{scriptsize}
\put(22.6,10){$\Vkt x_1$}
\put(75.5,10){$\Vkt x_2$}
\put(47.5,7.8){$\Vkt x_3$}
\end{scriptsize}        
  \end{overpic} 
\end{center}	
\caption{Double-Watt mechanism of Connelly and Servatius extended by the guidance of the midpoint $\Vkt x_3$ of $\Vkt x_1$ and $\Vkt x_2$ along a straight line.  
}
  \label{fig:doubleoriginalextended}
\end{figure}

Clearly as the algorithm given in Section \ref{sec:algorithm} is based on symbolic methods from computer algebra, we are faced with computational limits. But beside this problem the flexion order of a given configuration can be computed with the presented tools in all cases from the pure theoretical point of view. 
Another problem is the computation of configurations with a higher-order flexion, which is discussed in the next section.

\section{Computing 3-RPR configurations with a higher-order flexion}\label{sec:computing}

In this section we demonstrate, how the idea of Sabitov's finite algorithm for testing the bendability of a polyhedron \cite[page 231]{sabitov} can be used to compute iteratively configurations with a higher-order flexion. 
We do this exemplarily for a planar 3-RPR manipulator consisting of a moving triangle which is connected by three legs to the fixed base. The legs are jointed to the platform and the base by rotational (R) joints and the corresponding anchor points are denoted by $m_i$ and $M_i$, respectively, for $i=1,2,3$. The length $r_i$ of the legs can actively be controlled by prismatic (P) joints.

Our choice of the example was motivated by the following statement of Husty \cite{husty2023} that
$3^{rd}$-order flexibility ``{\it can be reached by any design because the three necessary conditions could be imposed on the input parameters only. Unfortunately neither the conditions nor the number of corresponding poses are known}''. We will clarify this in Section \ref{sec:barplate}. 

 Note that we can interpret the triangular base and platform either as (a) triangular plates or (b) triangular bar structures. In case (a) the 3-RPR manipulator can be seen as a pin-jointed bar-plate framework and in case (b) as a classical bar-joint framework. In the following Subsections \ref{sec:barplate} and \ref{sec:bar}  we distinguish these two interpretations as they will effect the discussion of configurations with a higher-order flexion. But let us start with some review on this topic.

As already mentioned in Section  \ref{sec:studstruc}
Wohlhart \cite{wohlhart_degree}  followed a kinematic version of Kuznetsov's approach for the study of higher-order flexible  3-RPRs (interpreted as bar-plate frameworks).  Stachel studied the geometry of higher-order flexible 3-RPRs (interpreted as bar-joint frameworks) in \cite{stachel_planar}, where he has shown the following result for a configuration of flexion order $(1,n)$:

{\it If one disconnects the leg $M_im_i$ from the platform, then the trajectory of the point $m_i$ under the resulting four bar motion has $n^{th}$-order contact with the circle centered in $M_i$ having radius $r_i$.}\footnote{According to  \cite[Lem.\ 1]{stachel_planar} a corresponding result also holds for Stewart--Gough platforms, which goes along with the definition of an ``{\it order of a configuration}'' given by Sarkissyan and Parikyan \cite{parikyan} in 1990 (see also Wohlhart \cite[page 1116]{wohlhart_degree}).} 

Moreover, this result implies that in this configuration $(n+1)$ realizations coincide, which also goes along with our redefinition given in Definition \ref{def3}. Based on this characterization Husty \cite{husty2023} has given an approach for the computation of 3-RPR configurations (interpreted as bar-plate frameworks) with flexion order 5, which has to be done carefully as it can also yield pseudo-solutions\footnote{Note that the example illustrated in Fig.\ 8 of \cite{husty2023} does not show a $5^{th}$-order flexion, as it is not 
a sixfold solution of the direct kinematics problem. The direct kinematic splits up into a fourfold solution and a twofold one. Therefore the two corresponding configurations are flexible of order 3 and 1, respectively.}.

\subsection{Bar-plate framework}\label{sec:barplate}

Let us start with the computation of $V_1$ for these mechanisms, which can be done in several ways. For the problem at hand we stress an approach of Husty and Gosselin \cite{husty_gosselin}, which is recapped next:

The coordinates $(a_i,b_i)^T$ of a point $m_i$ of the moving platform with respect to the moving frame can be transformed into coordinates of the
fixed frame using the so-called Blaschke-Gr\"unwald parameters $(q_0:q_1:q_2:q_3)$. They can be seen as homogeneous coordinates of points of a projective 3-dimensional space $P^3$. It is well known, that there is a bijection between points of this space sliced along the line $q_0=q_1=0$ and the planar motion group SE(2). 
The slicing has to be done to ensure that the 4-tuple $(q_0:q_1:q_2:q_3)$ can be normalized by $c_4=0$ with
\begin{equation}
c_4:=    q_0^2+q_1^2-1.
\end{equation}
If this normalization condition holds the above mentioned transformation reads as follows:
\begin{equation}
    \begin{pmatrix}
        a \\ b
    \end{pmatrix}
    \mapsto 
    \begin{pmatrix}
        q_0^2-q_1^2 & -2q_0q_1 \\
        2q_0q_1 &  q_0^2-q_1^2 
    \end{pmatrix}
    \begin{pmatrix}
        a \\ b
    \end{pmatrix}+
    \begin{pmatrix}
        2q_1q_2+2q_0q_3 \\ 2q_1q_3-2q_0q_3
    \end{pmatrix}.
\end{equation}
Using these Blaschke-Gr\"unwald parameters the condition that a point $m_i$ is located on a circle with radius $r_i$ around the fixed point $M_i$ with coordinates $(A_i,B_i)^T$ with respect to the fixed frame, can be written as $c_i=0$ with: 
\begin{equation}
\begin{split}
c_i:= & 2A_ia_iq_1^2 - 2A_ia_iq_0^2 + 4A_ib_iq_0q_1 - 4B_ia_iq_0q_1 - 2B_ib_iq_0^2 + 2B_ib_iq_1^2 + \\
&a_i^2q_0^2 + a_i^2q_1^2 + b_i^2q_0^2 + b_i^2q_1^2 - 4A_iq_0q_3 - 4A_iq_1q_2 + 4B_iq_0q_2 - 4B_iq_1q_3 + \\
&4a_iq_0q_3 - 4a_iq_1q_2 - 4b_iq_0q_2 - 4b_iq_1q_3 + A_i^2 + B_i^2 + 4q_2^2 + 4q_3^2 - r_i^2
\end{split}
\end{equation}
The information of the leg lengths $r_i$ complete the intrinsic metric of the framework. Then its realizations\footnote{In this context the realizations are also known as solutions of the direct kinematics problem.}  $G(\Vkt X)$ are obtained as the solutions of the four algebraic equations $c_1=c_2=c_3=c_4=0$. 
It is well-known that there can only exist six solutions thus a 
$6^{th}$-order flex (according to Definition \ref{def3}) implies a continuous flexion; i.e.\ $n^*=6$. 

Now we are looking for poses of the platform yielding an infinitesimal flexibility of the framework.
As described in Section \ref{sec:algapproach}, these configurations are characterized by the fact that the determinant of the  rigidity matrix $\Vkt R_{G(\Vkt X)}$ vanishes, which is given by
\begin{equation}
\Vkt R_{G(\Vkt X)}=(\nabla c_1,\nabla c_2,\nabla c_3,\nabla c_4)=
\begin{pmatrix}
    \frac{\partial c_1}{\partial q_0} &   \frac{\partial c_2}{\partial q_0} & \frac{\partial c_3}{\partial q_0} &  \frac{\partial c_4}{\partial q_0} \\
    \frac{\partial c_1}{\partial q_1} &   \frac{\partial c_2}{\partial q_1} & \frac{\partial c_3}{\partial q_1} &  \frac{\partial c_4}{\partial q_1} \\
    \frac{\partial c_1}{\partial q_2} &   \frac{\partial c_2}{\partial q_2} & \frac{\partial c_3}{\partial q_2} &  \frac{\partial c_4}{\partial q_2} \\
    \frac{\partial c_1}{\partial q_3} &   \frac{\partial c_2}{\partial q_3} & \frac{\partial c_3}{\partial q_3} &  \frac{\partial c_4}{\partial q_3} 
\end{pmatrix}    
\end{equation}
according to Eq.\ (\ref{eq:rigidityM}). 
Then the shakiness variety $V_1$ equals the zero set of $s:=\det\left( \Vkt R_{G(\Vkt X)} \right)$. 
According to \cite{adit} this variety has only singularities for some special designs beside the singularities resulting from the parametrization, which equal the line $q_0=q_1=0$.
Therefore in the generic case each point of $V_1$ sliced along the line $q_0=q_1=0$ is a regular one. Thus according to Lemma \ref{lem:1} the tangent planes to $c_1,\ldots ,c_4$ have a line in common. The orthogonality of this line to $\nabla s$ is equivalent to the condition 
\begin{equation}
    rk(\nabla c_1,\nabla c_2,\nabla c_3,\nabla c_4,\nabla s)=3
\end{equation}
which implies the four conditions $s_1=s_2=s_3=s_4=0$ with:
\begin{align}
s_1:=&\det(\nabla c_2,\nabla c_3,\nabla c_4,\nabla s),    &\quad
s_2:=&\det(\nabla c_1,\nabla c_3,\nabla c_4,\nabla s),    \\
s_3:=&\det(\nabla c_1,\nabla c_2,\nabla c_4,\nabla s),    &\quad
s_4:=&\det(\nabla c_1,\nabla c_2,\nabla c_3,\nabla s).    
\end{align}
Then $V_2$ is the zero set of the ideal 
\begin{equation}\label{eqI2}
 I_2 =   \langle  
    s, s_1, s_2, s_3, s_4
    \rangle.
\end{equation}
Iteration of the above procedure yields the conditions $s_{1,i}=s_{2,i}=s_{3,i}=s_{4,i}=0$ with:
\begin{align}
s_{1,i}:=&\det(\nabla c_2,\nabla c_3,\nabla c_4,\nabla s_i),    &\quad
s_{2,i}:=&\det(\nabla c_1,\nabla c_3,\nabla c_4,\nabla s_i),    \\
s_{3,i}:=&\det(\nabla c_1,\nabla c_2,\nabla c_4,\nabla s_i),    &\quad
s_{4,i}:=&\det(\nabla c_1,\nabla c_2,\nabla c_3,\nabla s_i),    
\end{align}
for $i=1,\ldots, 4$. 
Then $V_3$ is the zero set of the ideal 
\begin{equation}\label{eqI3}
I_3=    \langle  
    s, s_1, s_2, s_3, s_4,s_{1,1},\ldots,s_{4,1}, 
    s_{1,2},\ldots,s_{4,2},
    s_{1,3},\ldots,s_{4,3},
    s_{1,4},\ldots,s_{4,4}
    \rangle.
\end{equation}
In addition the singular points of $V_2$ have to considered separately. As $V_2$ is a curve in $P^3$ a singularity corresponds to the case 
\begin{equation}\label{eq:singcurvpoints}
    rk(\nabla s,\nabla s_1,\nabla s_2,\nabla s_3,\nabla s_4)=1. 
\end{equation}
In the following we apply this procedure to a concrete example.

\begin{example}\label{ex:3rpr}
The geometry of the platform and base is given by:
\begin{equation}\label{eq:geomRPR}
\begin{split}
    &A_1=0,\quad B_1=0,\quad A_2=3,\quad B_2=0, \quad A_3=1,\quad B_3=3, \\
    &a_1=0,\quad b_1=0,\quad a_2=1,\quad b_2=0, \quad a_3=2,\quad b_3=1. 
\end{split}    
\end{equation}
For this values we obtain 
\begin{equation}
\begin{split}
     s =\, &5q_0^3q_2 - 13q_0^2q_1q_2 - 4q_0^2q_1q_3 + 5q_0^2q_2^2 + 7q_0q_1^2q_2 + 11q_0q_1^2q_3 - \\
    &6q_0q_1q_2^2 - 6q_0q_1q_3^2 + 10q_1^3q_3 - 5q_1^2q_3^2.
\end{split}
\end{equation}
In the next step we consider the ideal $I_2$ given in Eq.\ (\ref{eqI2}). By means of Hilbert dimension it can be verified that $V_2$ is a curve in $P^3$. Moreover, the degree of $V_2$ is 18. But $V_2$ splits up into a curve $g$ of degree 14 and the line $q_0=q_1=0$ of multiplicity 4, which can be seen as follows: We add the expression $(q_0^2+q_1^2)u-1$ to the ideal $I_2$ and eliminate the unknown $u$ (also known as the Rabinowitsch trick). The variety of the resulting elimination ideal is only of degree 14. Moreover, it can be checked that only $(q_0:q_1)=(0:0)$ fulfill the equations 
$s=s_1=s_2=s_3=s_4=0$ but not $(q_0:q_1)=(1:\pm I)$.  
Due to the slicing of $P^3$ along $q_0=q_1=0$ we can restrict to the curve $g$ of degree 14. It can easily be checked that $g$ does not contain any singular points by applying the criterion of Eq.\ (\ref{eq:singcurvpoints}). 

In the last step we consider the ideal  $I_3$ given in Eq.\ (\ref{eqI3}).  $V_3$ again contains the line $q_0=q_1=0$ with  multiplicity 3. We can get rid of this line in the same way as done in the case of $V_2$ (Rabinowitsch trick). The elimination ideal then yields 32 solutions. We can even eliminate $q_0$ and $q_3$ from the set of equations generating $I_3$ to end up with the polynomial of degree 32, which is given in the Appendix. 
By setting $q_1=1$ we can easily check that it has 10 real solutions, which are given in Table \ref{tab1}. Moreover, the corresponding configurations are illustrated in Fig.\ \ref{fig4}. 
In addition, the shakiness variety $V_1$, the curve $g$ and the 10 configurations are illustrated in Fig.\ \ref{fig:3rpr} for $q_0=1$.

Finally it should be noted, that it remains unclear 
if examples with 32 real solutions exist. \hfill $\diamond$
\end{example}

\begin{table}[t]
\caption{All real RPR-configurations with a $3^{rd}$-order flex for the geometry given in Eq.\ (\ref{eq:geomRPR})}
\centering
\begin{tabular}{|c||c|c|c|c|c|}
\hline
\# & $q_0$ & $q_1$ & $q_2$ & $q_3$ & Fig.\ \ref{fig4}\ \\ \hline
1 &
 $\phm 0.612011087187$ &
 $0.790849182309$ &
$-1.605503824990$ &
 $\phm 0.608460800603$ &
(a)
\\ \hline
2 &
$-0.335887854729$ &
$0.941901984839$ &
$-0.360610831902$ &
$\phm 2.136077449950$ &
(b)
\\ \hline
3  &
 $\phm 0.933493296982$ &
 $0.358594847269$  &
$-0.518343596625$  &
 $\phm 0.387989956333$ &
(c)
\\ \hline
4 &
$-0.351833124675$ &
 $0.936062739553$ &
 $\phm 1.595410958064$ &
 $\phm 1.897762719666$ &
(d)
\\ \hline
5 &
 $\phm 0.926572314644$ &
 $0.376116665058$ & 
 $\phm 0.064697675622$ &
$-0.063224689351$ &
(e)
\\ \hline
6 &
$-0.985793710397$ &
 $0.167960592226$ &
 $\phm 0.665253728293$ &
 $\phm 2.206010002417$ &
(f)
\\ \hline
7 &
$-0.388425191626$ &
 $0.921480260510$ &
 $\phm 0.083061189759$ &
 $\phm 0.100978528116$ &
(g)
\\ \hline
8 &
$-0.430899664574$ &
 $0.902399844342$ &
 $\phm 1.635384670001$ &
$-0.158191823892$ &
(h)
\\ \hline
9 &
$\phm 0.700957636960$ &
$0.713202910248$ &
$\phm 1.476082504043$ &
$\phm 0.619974829761$ &
(i)
\\ \hline
10 &
$-0.981604898439$ &
 $0.190923606082$ &
 $\phm 0.557314730844$ &
$-1.086046127580$ &
(j)
\\ \hline
\end{tabular}
\label{tab1}
\end{table}

\begin{figure}[t]
\begin{center}
\begin{overpic}
    [width=\textwidth]{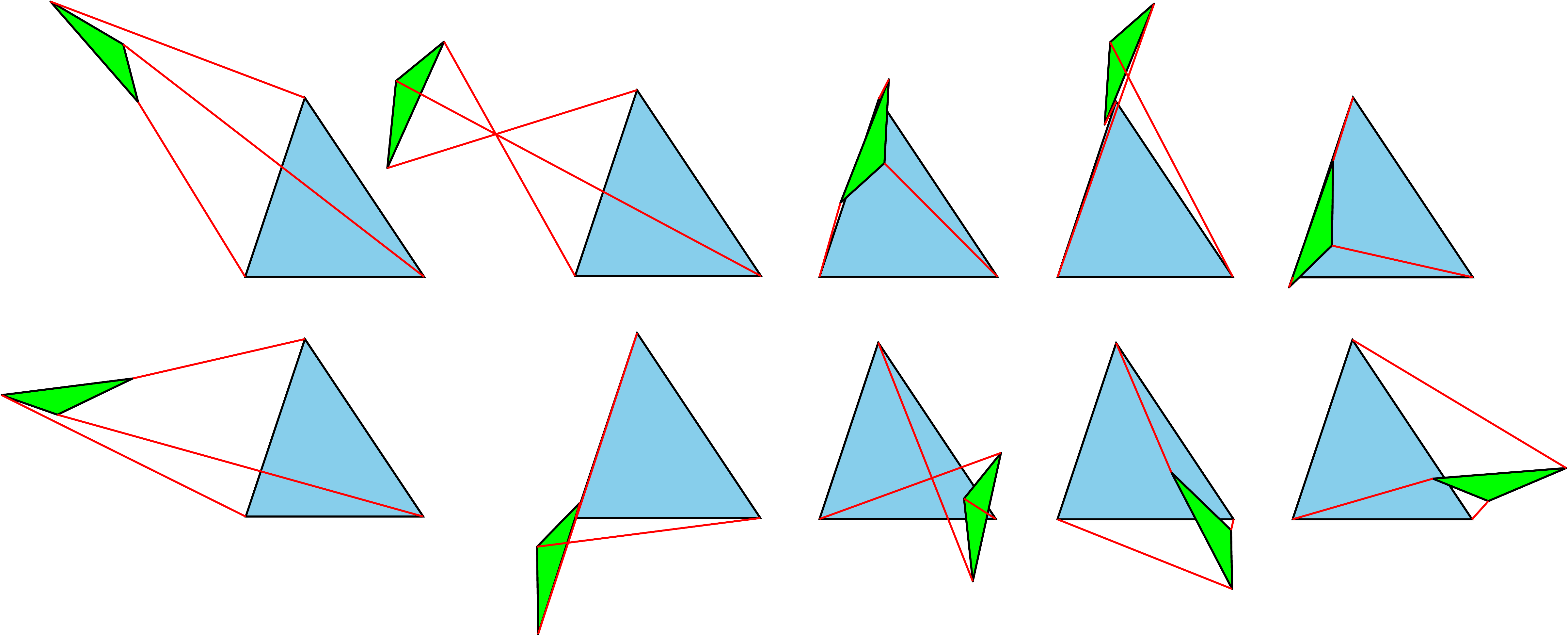}  
    \begin{scriptsize}
\put(20,20.5){(a)}
\put(41,20.5){(b)}
\put(56.5,20.5){(c)}
\put(72,20.5){(d)}
\put(87.5,20.5){(e)}
\put(20,2.5){(f)}
\put(41,2.5){(g)}
\put(56.5,2.5){(h)}
\put(72,2.5){(i)}
\put(87.5,2.5){(j)}
\end{scriptsize}  
  \end{overpic} 
\end{center}	
\caption{Visualization of the 10 configuration with a $3^{rd}$-order flex given in Table \ref{tab1}.}
  \label{fig4}
\end{figure}

\subsection{Bar-joint framework}\label{sec:bar}

For the interpretation as bar-joint framework there exists 24 realizations, as the platform triangle as well as the base triangle can flip. But this does not imply that $n^*=24$ holds true for all cases, as the following study will show. 

As we assumed in Section \ref{sec:intro} that bar lengths are always non-zero we can assume a rescaling of the framework such that the bar between $M_1$ and $M_2$ has length one. Then the pin-joints can be coordinatized as follows with respect to the fixed frame:
\begin{equation}
   M_1=(0,0)^T, \quad  M_2=(1,0)^T, \quad  M_3=(A_3,B_3)^T, \quad  
   m_j=(a_j,b_j)^T,
\end{equation}
for $j=1,2,3$. If the remaining 8 bar lengths are known they imply 8 distance equations $c_1,\ldots, c_8$. The solutions of this set of equations correspond to realizations of this isostatic bar-joint framework. Then we can compute the $(8\times 8)$ rigidity matrix according to 
Eq.\ (\ref{eq:rigidityM}). 
Again the shakiness variety $V_1$ is given as the zero set of $\det\left( \Vkt R_{G(\Vkt X)} \right)$ which splits up into the following three factors $s_1s_2s_3$ with:
\begin{equation}
\begin{split}
    s_1=&B_3, \\
    s_2=&a_1b_2 - a_1b_3 - a_2b_1 + a_2b_3 + a_3b_1 - a_3b_2, \\
    s_3=&A_3a_1b_2b_3 - A_3a_2b_1b_3 - B_3a_1a_3b_2 + B_3a_2a_3b_1 - \\
    &A_3b_1b_2 + A_3b_1b_3 + B_3a_1b_2 - B_3a_3b_1 - a_1b_2b_3 + a_3b_1b_2
\end{split}
\end{equation}
Their geometric interpretation is that for $s_1=0$ (resp.\ $s_2=0$) the base (resp.\ platform) degenerates into a line\footnote{A triangle and its mirrored version can only coincide if it degenerates into a line.}. For $s_3=0$ the three legs belong to a pencil of lines. 
Let us denote the varieties $s_i=0$ by $S_i$ for $i=1,2,3$. 
Now we can easily identify the following regions of $V_1$ where different values for $n^*$ hold true:
\begin{align}
&S_1\setminus (S_2 \cup S_3) &\quad & n^*=2  \label{eq:zwei1}\\ 
&S_2\setminus (S_1 \cup S_3) &\quad & n^*=2 \label{eq:zwei2} \\
&S_3\setminus (S_1 \cup S_2) &\quad & n^*=6 \\
&(S_1\cap S_2)\setminus S_3  &\quad & n^*=4 \label{eq:vier}\\
&(S_1\cap S_3)\setminus S_2  &\quad & n^*=12 \\
&(S_2\cap S_3)\setminus S_1  &\quad & n^*=12 \\
&S_1\cap S_2 \cap S_3  &\quad & n^*=24
\end{align}
\vspace{-5mm}
\noindent
\begin{remark}
We are aware of the fact that no point on $S_1\setminus (S_2 \cup S_3)$ or $S_2\setminus (S_1 \cup S_3)$ can reach a higher flexion order than 1, as a triangle does not allow an isometric deformation according to the side-side-side theorem. Therefore $n^*=2$ of Eqs.\ (\ref{eq:zwei1}) and (\ref{eq:zwei2}) as well as $n^*=4$ of Eq.\ (\ref{eq:vier}) can never be reached and are only of theoretical nature. \hfill $\diamond$    
\end{remark}

In the following we give the construction of configurations with the highest possible flexion order. Let us assume that the platform and the base triangles degenerate into lines $l$ and $L$, respectively. 
A necessary condition for a configuration of flexion order 23, is that $l$ and $L$ coincide. If this would not be the case one can reflect the configuration on one of these lines to get another realization, which contradicts the assumption that all 24 realization coincide. 

\begin{remark}
Interestingly such a configuration is not only a singular point of $V_1$, as it is located in the intersection of $S_1$, $S_2$ and $S_3$ but already a singular point of $S_3$ according to \cite{adit}. \hfill $\diamond$    
\end{remark}

Therefore  a $23^{rd}$-order flexible bar-joint framework follows from a $5^{th}$-order flexible plate-bar framework, where all six anchor points are located on a line. This problem can be solved following the already mentioned approach of Husty  \cite{husty2023}. In this way the following example was computed.

\begin{example}\label{ex:flex23}
    The geometry of the base is given by
    \begin{equation}
        M_1=(0,0)^T,\quad 
        M_2=(1,0)^T, \quad
        M_3=(5,0)^T
    \end{equation}
    with respect to the fixed system and the geometry of the platform is given by 
    \begin{equation}
        m_1=(0,0)^T, \quad
        m_2=(\tfrac{1}{2} + \tfrac{2\sqrt{10}}{5} - \tfrac{\sqrt{120\sqrt{10}-255}}{10},0)^T, \quad
        m_3=(3,0)^T
    \end{equation}
    with respect to the moving frame. The information on the intrinsic metric of the framework is completed by the following lengths of the three legs:
    \begin{equation}
        r_1=\tfrac{3}{2} + \tfrac{2\sqrt{10}}{5} - \tfrac{\sqrt{120\sqrt{10}-255}}{10}, \quad
        r_2=2, \quad
        r_3=\tfrac{7}{2} + \tfrac{2\sqrt{10}}{5} - \tfrac{\sqrt{10}\sqrt{48\sqrt{10}-102}}{20}.
    \end{equation}
This configuration is illustrated in Fig.\ \ref{fig5} where also pictures of a model can be seen, which was produced for validation of the higher-order flexion. 
\hfill $\diamond$
\end{example}

\begin{remark}
    It should be possible to determine the set  of these frameworks with flexion order 23 in full generality (as only 4 unknowns are involved), 
		which is dedicated to future research. \hfill $\diamond$
\end{remark}

\begin{figure}[t]
\begin{center}
\begin{overpic}
    [width=50mm]{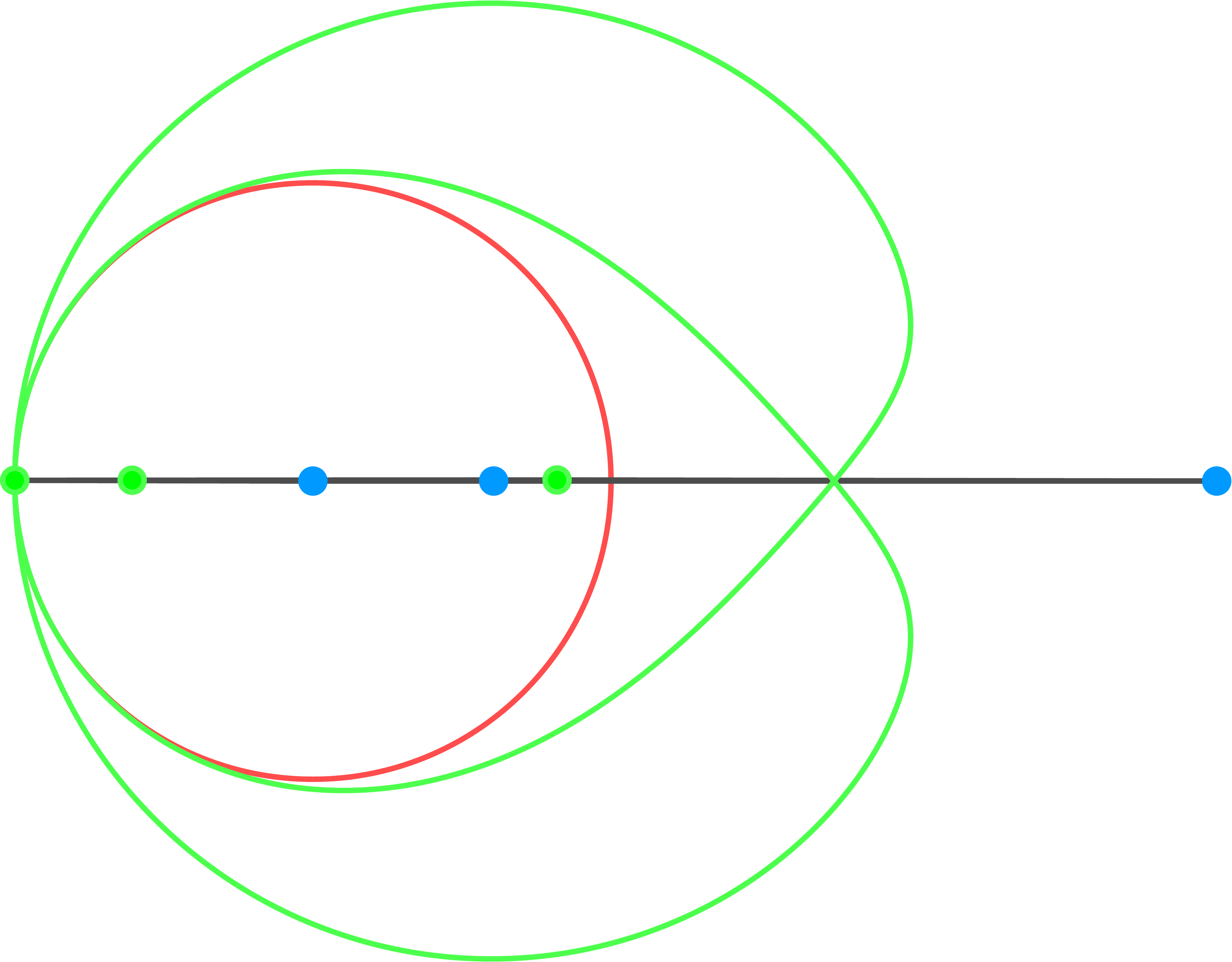}  
    \begin{scriptsize}
\put(-4.5,42){$m_1$}
\put(8,42){$m_2$}
\put(43,42){$m_3$}
\put(22.5,34){$M_1$}
\put(38,34){$M_2$}
\put(96.5,34){$M_3$}
\end{scriptsize}  
  \end{overpic} 
\qquad
  \begin{overpic}
    [width=50mm]{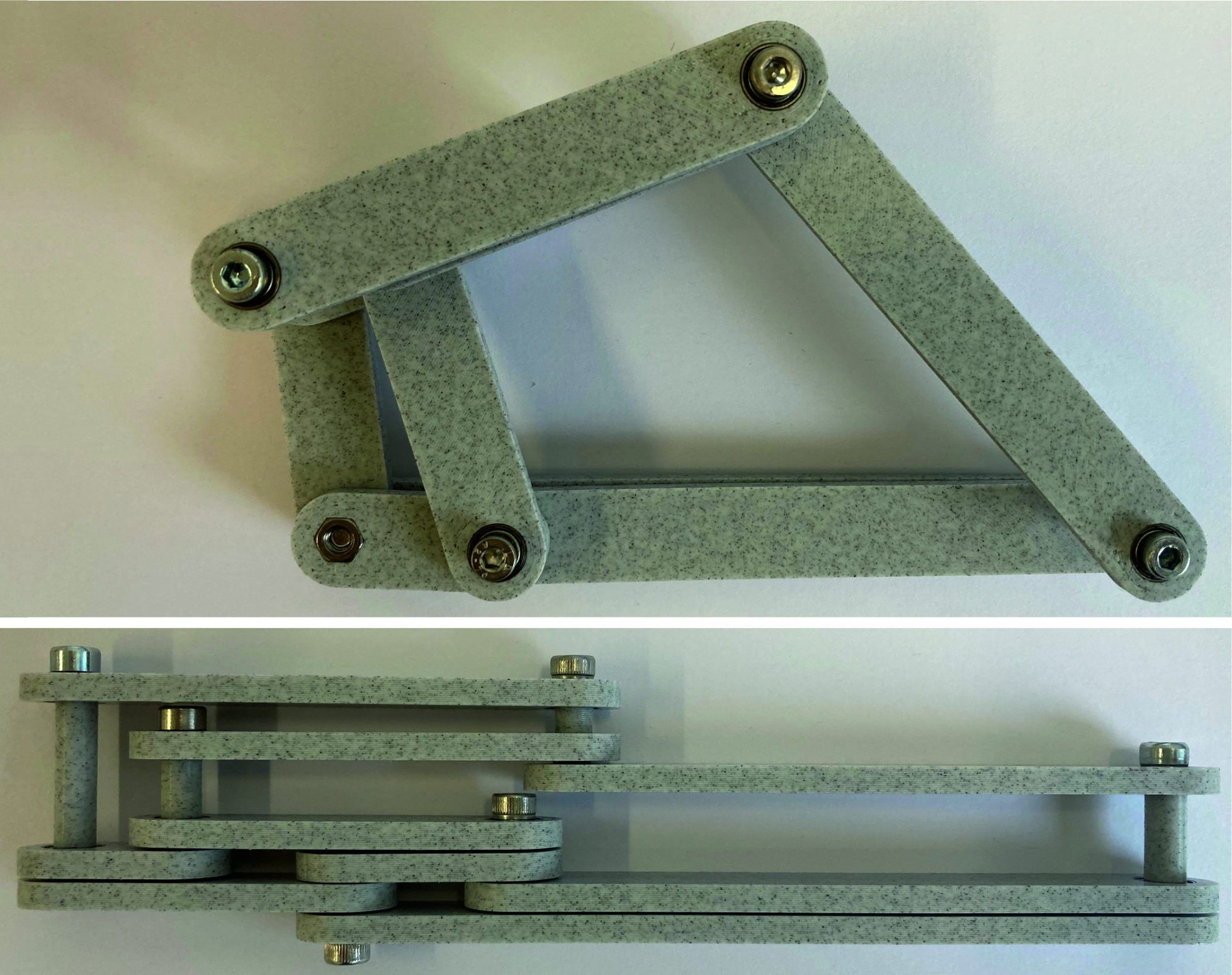}  
  \end{overpic} 
\end{center}	
\caption{(left) Visualization of the bar-joint framework with a flexion of order 23. The green curve shows the coupler curve of the point $m_1$, which results by giving away the first leg. The red curve is a circle with radius $r_1$ and midpoint $M_1$. The circle intersects the green coupler curve in $m_1$ with multiplicity 6. (right) Model of the bar-joint framework with flexion order 23, which is constructed using multiple layers (bottom). The model allows a large flexion as illustrated in the top. }
  \label{fig5}
\end{figure}

\section{Final remarks, open problems and future work}\label{sec:end}

In the paper we presented a global approach for a proper redefinition of higher-order flexibility and rigidity.
We only discussed planar frameworks, but the proposed algebraic method works for frameworks of any dimension. 
Especially, it is planned to apply the iterative procedure of
Section \ref{sec:computing} also to the spatial version of 3-RPR manipulators, which are Stewart--Gough platforms. Any such manipulator (interpreted as bar-body framework) has to have configurations with a $6^{th}$-order flexion, whose detailed investigation is dedicated to future research. Furthermore we are interested in the highest possible flexion order\footnote{According to Definition \ref{def3} its upper bound is 39 but a configuration only depends on 30 unknowns (up to Euclidean motions), which can be adjusted. From that one might expect a maximal flexion order of 30.} of  Stewart--Gough configurations and their computation.

Let us close the paper with the following list of final remarks and open problems:
\begin{enumerate}
\item
Note that the presented approach does not only work for bar-joint frameworks but it can be applied to any framework with algebraic joints; i.e.\ the relative position of two jointed rigid bodies can be described algebraically. 
But it remains open to extend it to frameworks with non-algebraic joints (cf.\ \cite{tarnai}).
\item
With our approach we were able to give a proper redefinition of higher-order flexibility and rigidity, but the computation of the 
associated $(k,n)$-flex(es) in dependence of the time parameter $t$ remains open and is dedicated to future research. 
We plan to solve this problem by means of tropical geometry and Puiseux series as this promising approach was already successfully used in \cite{Abhilash} 
for analyzing the configuration space of mechanisms.

Moreover, for this task we also want to generate further examples by following an idea of Stachel \cite{stachel_private} using the two-point guidance method, where the points $\Vkt x_1(t)$ and $\Vkt x_2(t)$ are in (higher-order) singularities of their paths at $t=0$. 
\item
Our approach operates over $\CC$ and does not take reality issues into account so far (for a local attempt see \cite{wampler}). For example, a planar 4-bar mechanism where the bar lengths $a,b,c,d$ fulfill the equation $a+b+c=d$ has a 1-dimensional set of configurations which are all complex with exception of one single configuration $\Vkt X$. 
Our algorithm would assign to $\Vkt X$ the flexion order $\infty$ but it is only shaky over $\RR$. 
We suggest the following procedure for resolving this minor problem\footnote{Note that this special case is circumvented by the formulation of Definition \ref{def3}, as we assumed that the considered configuration does not belong to a continuous flexion of the framework (over $\CC$).}.  
  Namely, instead of just assigning the value $\infty$ as flexion order, we propose to consider the corresponding $(k,n)$-flex(es) mentioned in item 2 above.  
	More precisely we are only interested in the degree $n^{\RR}\leq n$ of the highest possible real flexion. The(se) number(s) can then be used to assign a real flexion order to the configuration. 
  
  Note that the analysis of a framework configuration $\Vkt X$, which corresponds to an isolated real solution within a higher-dimensional complex configuration set, has to be
	handled with special care, as in this case $\Vkt X$ can also arise as an embedded component in the complex solution set (cf.\ \cite{zijia}). Then the irredundant primary decomposition proposed in the algorithm of Section \ref{sec:algorithm} is not unique anymore (cf.\ Remark \ref{rem:unique}). 
  The study of further examples in this context is dedicated to future research.  
\item 
The algorithm presented in Section \ref{sec:algorithm} for determining the intersection multiplicity requires global constructions (like primary decomposition and U-resultant method), but the multiplicity is a local property according to \cite{kirby}. 
Therefore again one can think about using local methods (e.g.\ Serre's Tor formula) 
to determine this number. It remains open if 
these local methods can also detect a continuous flexion and if they work in all cases (like the presented global approach). 
\end{enumerate}

\begin{acknowledgement} 
The author wants to thank to Hellmuth Stachel for the detailed discussions on his approach and for his permission to use Figure \ref{fig2}. Moreover, thanks to Daniel Huczala for building the model illustrated in Figure \ref{fig5}.  
Further, the author wants to thank the organizers of the Workshop ``Kinematic Aspects of Robotics'', which was part of the ``Special Semester on Rigidity and Flexibility'' held at the Johann Radon Institute for Computational and Applied Mathematics (Linz, Austria) in 2024, for invitation; particularly Zijia Li for fruitful discussions on primary decomposition.
\end{acknowledgement}

\appendix

\section*{Appendix: Polynomial of degree 32}\label{sec:app}
\vspace{-1mm}
\begin{small}
\begin{equation}
    \begin{split}
&516969488961264858296977044q_1^{32} - 9280309213987777419484380570q_1^{31}q_2 + \\ &43526270232117271834556502073q_1^{30}q_2^2 - 45280692730479399589412412168q_1^{29}q_2^3 - \\
&71413409266992435779029661320q_1^{28}q_2^4 + 733787582609859082926495640512q_1^{27}q_2^5 - \\
&1216057499416546331816336021712q_1^{26}q_2^6 + 1178525008268380508404672967040q_1^{25}q_2^7 + \\
&304983853881480483586054315776q_1^{24}q_2^8 + 373067534199906557276943674880q_1^{23}q_2^9 -  \\
&3506865857305108140637354422016q_1^{22}q_2^{10} + 1515457906293380496214847031296q_1^{21}q_2^{11} + \\
&2762451499211791028130610419712q_1^{20}q_2^{12} - 1507176820840441939654068420608q_1^{19}q_2^{13} - \\
&1140312255149192283851181674496q_1^{18}q_2^{14} + 370917717379345332121827704832q_1^{17}q_2^{15} + \\
&540356234313346392866675687424q_1^{16}q_2^{16} + 218622983025805473045891121152q_1^{15}q_2^{17} - \\
&513129700297250458379419975680q_1^{14}q_2^{18} - 146998314630604587702018375680q_1^{13}q_2^{19} + \\
&453229949991189146809689178112q_1^{12}q_2^{20} - 132638145759863692629486075904q_1^{11}q_2^{21} - \\
&148240985447636170928282402816q_1^{10}q_2^{22} + 124425897331594410107904983040q_1^9q_2^{23} - \\
&12386269734048188883819036672q_1^8q_2^{24} - 27049821097913736077418430464q_1^7q_2^{25} + \\
&14831418158089604670896996352q_1^6q_2^{26} - 1721669183596659665641930752q_1^5q_2^{27} - \\
&1309349875968694100160413696q_1^4q_2^{28} + 691975482131520534161129472q_1^3q_2^{29} - \\
&156210223994716269983039488q_1^2q_2^{30} + 18063680521134606070579200q_1q_2^{31} - \\
&874805860916262711853056q_2^{32}        =0 
    \end{split}
\end{equation}
\end{small}

\end{document}